%% file: main_elastic_w.tex
\documentclass[10pt]{article}
\usepackage{graphicx} % Required for inserting images
\usepackage[letterpaper,top=2cm,bottom=2cm,left=3cm,right=3cm,marginparwidth=1.75cm]{geometry}
\newcommand{\RR}{{\mathbb{R}}}

\newcommand{\om}{\Omega}

\newcommand{\dive}{\text{\normalfont div}}

\input{packages}
\begin{document}

\maketitle

% \tableofcontents

\begin{abstract}
	\noindent
		We consider the inverse problem of determining an inclusion contained in an elastic body undergoing time-harmonic oscillations at fixed frequency by local Cauchy data. Both the body and the inclusion
    	are made by different homogeneous linearly elastic isotropic materials, with piecewise constant mass densities. Under mild a priori regularity assumptions on the unknown inclusion and with no spectral hypothesis on the frequency, we establish a logarithmic-type stability estimate by local Cauchy data. We also provide a stability result in terms of the so local Dirichlet to Neumann map for sufficiently small frequency. 
\end{abstract}

\section{Introduction}

In this work we study the inverse problem of determining an inclusion $D$ contained in an elastic body $\Omega$ from the time-harmonic local data representing  the oscillation amplitudes and the corresponding boundary tractions collected on a portion  $\Sigma$ of the boundary.

For a prescribed boundary displacement $f \in H^{1/2}(\partial \Omega)$, the small time-harmonic oscillations with frequency $\omega$ and amplitude $u(x) \in H^1(\Omega)$ of the continuum are governed by the following boundary value problem
\begin{equation}
	\label{eqn:BVP}
	\begin{cases}
		\Div(( \C + (\C^{I} - \C)\chi_D(x)   ) \nh u) + \omega^{2}( \rho + (\ri - \rho)\chi_D(x)  ) u = 0 &\text{in }\Omega,\\
		u = f &\text{on }\partial\Omega,
	\end{cases}
\end{equation}
where $\Omega$ is a bounded domain in $\mathbb{R}^3$ and $D$ is an open set strictly contained in $\Omega$ with $C^{1,\alpha}$ boundary. The subdomain $\Omega \setminus D$ and the inclusion $D$ consist of different isotropic and homogeneous elastic materials, with elasticity tensors $\C$, $\C^I$ determined by the Lamé parameters $\mu,\lambda$ and $\mu^I, \lambda^I$, respectively, and such that $\C$, $\C^I$ are strongly convex in $\Omega$. The mass density is piecewise constant and equal to $\rho$, $\rho^I$ in $\Omega \setminus D$ and in $D$, respectively. In \eqref{eqn:BVP}, $\chi_D$ is the characteristic function of $D$ and the strain tensor is denoted by  $\nh u = (\nabla u +(\nabla u)^{T})/2$.

We denote by $\mathcal{C}_D^{\Sigma}(\Sigma)$ the set of all possible Cauchy data $(u|_{\Sigma}, (\C_D\nh u) n|_{\Sigma})$ associated to the problem, where $n$ is the outer unit normal to $\Omega$ . The main goal of this research is to establish a stability estimate for the determination of the inclusion $D$ in terms of the local Cauchy data $\mathcal{C}_D^{\Sigma}(\Sigma)$.

Let us observe that for $\omega=0$, namely when one deals with the static case, the direct problem is well posed and one can define the so called local Dirichlet to Neumann map 
\begin{eqnarray}\label{eqn: DtoN intro}
	\Lambda_D^{\Sigma}: H^{1/2}_{00}(\Sigma) \rightarrow
	H^{-1/2}_{00}(\Sigma),\\
	f \mapsto (\C_D\nh u) n|_{\Sigma}
\end{eqnarray}
which assigns to each displacement prescribed on $\Sigma$ the corresponding traction (see Section \ref{sec:local-Cauchy} for precise definitions of the trace spaces). Of course, when no spectrum conditions on $\omega$ are given, the existence of such operator is not guaranteed. 

In our context, the use of Cauchy data in place of the local Dirichlet to Neumann map appears more appropriate, as it enables to circumvent spectral-type frequency restrictions, thereby yielding a result that is suitable for applications.
We recall that the scalar counterpart of this problem, namely the determination of an unknown inclusion  for the Schr\"{o}dinger equation, has been discussed by Foschiatti and Sincich in \cite{F-S} while, to the best of our knownledge, stability estimates for the time harmonic elastic wave equation allowing resonance condition have not yet been provided. 

The physical problem described by \eqref{eqn:BVP} is of significant relevance in diagnostic applications in seismology and geophysics. As observed in \cite{Beretta2017}, a notable application arises in seismology. Established studies dating back to the 1980s show that methods based on measurements of the Dirichlet-to-Neumann map are effective in reconstructing the mechanical properties of the soil and the mass density profile at depth. We refer to \cite{Baeten-Tesi-PhD} for an early overview of potential geophysical applications and the experimental methodologies employed. These applications confirm that the assumption of piecewise constant coefficients is quite realistic and describes concrete situations in which, for instance, a significant discontinuity in the elastic coefficients or in the mass density occurs across the interface of the inclusion.

Inverse problems in linear elasticity have been extensively studied over the past decades, both in the static (e.g., $\omega=0$ in \eqref{eqn:BVP}) and in the dynamic regime. One line of research has focused, for instance, on the determination of the Lamé moduli from the Dirichlet-to-Neumann map for $\omega=0$, in analogy with the corresponding problem for a conducting body. Fundamental contributions in this direction can be traced back, among others, to the works of Nakamura and Uhlmann \cite{N-U1}, \cite{N-U2}, \cite{N-U3}, Eskin and Ralston \cite{E-R}, and Imanuvilov, Uhlmann and Yamamoto \cite{I-U-Y}. These results typically hold for sufficiently regular coefficients satisfying suitable a priori assumptions. A Lipschitz stability result in the case of piecewise constant Lamé moduli, with unknown coefficients but with a known decomposition of the domain, has been obtained by Beretta, Francini and Vessella \cite{B-F-V-2014}.

More specifically related to the determination of inclusions, an alternative approach has been based on the identification of the unknown boundary of the inclusion, namely the interface $\partial D$, from boundary measurements. In the static case $\omega=0$, Morassi and Rosset obtained log-log type stability estimates from a single pair of Cauchy data on $\partial \Omega$ in the extreme cases of a cavity \cite{M-R1} and of a rigid inclusion \cite{M-R2}. Alessandrini et al. \cite{AleDiCMorRos14} proved a logarithmic stability estimate for the identification of an elastic inclusion with regular boundary embedded in an elastic body from the Dirichlet-to-Neumann map. The result holds under the assumption that the Lamé moduli of the inclusion are constant and different from those of the surrounding material. The extension of the validity of the estimate to variable coefficients, sufficiently regular both outside and inside the inclusion, has been established by Morassi and Rosset in \cite{M-R3}.

Modal measurements were used by Ammari et al. \cite{A-B-F-K-L-2010} to determine small changes in the interface $\partial D$ of an elastic inclusion, when both the inclusion and the surrounding body are made by homogeneous linearly elastic isotropic material. A reconstruction procedure based on measurements of one eigenvalue and the corresponding eigenfunctions was proposed using  an optimization approach.

For the inverse problem of detecting an inclusion from the time-harmonic elastic waves as \eqref{eqn:BVP}, the presence of frequency-dependent effects and possible resonances introduces additional analytical challenges with respect to the static case. Therefore, despite the clear physical meaning and practical impact of the inverse problem, only partial results are currently available. Reconstruction approaches based on sampling and factorization methods have been proposed by Cakoni, Colton and Haddar \cite{CakoniColtonHaddar2016}, although mostly in the context of scattering data rather than boundary measurements. Beretta et al. \cite{Beretta2017} consider the determination of the Lamé parameters and the mass density in three dimensions from the local time-harmonic Dirichlet-to-Neumann map. The authors proved a Lipschitz stability estimate when the Lamé parameters and the mass density are piecewise constant on a given finite partition of the domain. 

More recently, Eberle-Blick and Pohjola \cite{E-B-P-2024} extended the monotonicity method (see also \cite{H-P-S-2019}, \cite{E-H-2021}) to the time-harmonic elastic problem \eqref{eqn:BVP} when the Neumann-to-Dirichlet map is given. This is a shape reconstruction method that allows one to obtain an approximation of the inclusion domain $D$, provided that $\Omega \setminus D$ is connected. We refer to \cite{E-B-P-2024} for an updated state of the art on shape reconstruction methods for the linear elasticity system. It should be noted that the reconstruction methodology proposed in \cite{E-B-P-2024} works for specific forms of coefficient perturbations, namely when the Lamé parameters increase and the mass density decreases in the inclusion $D$, or when the Lamé parameters remain constant and the mass density increases.

As main result of the paper, we provide in Theorem \ref{thm: stability2} a  stability estimate for the unknown inclusion $D$ in terms of local Cauchy data on $\Sigma$ which works also in resonant regime.  The inverse problem is, however, highly ill-conditioned and in fact the stability estimate is of logarithmic type. As in the approach used by Alessadrini, de Hoop, Gaburro and Sincich \cite{A-dH-G-S}, we have chosen to express the errors on the boundary data in terms of the so called angle (or distance) between the spaces of Cauchy data interpreted as subspaces of a suitable Hilbert space. 

An alternative approach to stability is based on the representation of the data throughout the local Dirichlet-to-Neumann map $\Lambda_D$ given on $\Sigma$. In this case, assuming the wave frequency $\omega$ sufficiently small to avoid the onset of resonance phenomena, we prove an analogous logarithmic stability result.  The precise statement is presented in Theorem \ref{thm: stability}. Our proof is inspired by the techniques developed in the papers by Alessandrini et al. \cite{AleDiCMorRos14} (which consider the static case of \eqref{eqn:BVP} with $\omega=0$) and by Beretta et al. \cite{Beretta2017}.

%\textcolor{red}{(to be completed)}.

\section{Definitions and a priori assumptions}

\subsection{Notation and Definitions}

For $x\in{\R}^3$, let us denote $x=(x',x_3)$, where $x'=(x_1,x_2)\in{\R}^{2}$,
$x_3\in{\R}$. Let us denote by $\R^{3}_+= \{x\in \R^3 \ | \ x_3>0 \}$ and
$\R^3_-= \{x\in \R^3 \ | \ x_3<0 \}$. For $x \in \R^3$, $r>0$, we shall use the
following notation for balls and cylinders.
\begin{equation*}
    B_r(x)=\{y \in \R^3 \ \mid \ |y-x|<r\}, \quad B_r=B_r(0),
\end{equation*}
\begin{equation*}
	B_r^+(x)=\{ y \in \R^3 \ \mid \ |y-x|<r, \ y_3>0 \}, \quad B_r^+=B_r^+(0),
\end{equation*}
\begin{equation*}
	B_r^-(x)=\{  y \in \R^3 \ \mid \ |y-x|<r,  \ y_3<0 \}, \quad B_r^-=B_r^-(0),
\end{equation*}
\begin{equation*}
    B'_r(x')=\{y' \in \R^2 \ \mid \ |y'-x'|<r\}, \quad B'_r=B'_r(0),
\end{equation*}
\begin{equation*}
    Q_{a,b}(x) = \{ (y', y_3) \ \mid \ |y'-x'|<a, \ |y_3-x_3|<b \},
    \quad Q_{a,b}=Q_{a,b}(0),
\end{equation*}
\begin{equation*}
    Q_{a,b}(x)^+ = \{ (y', y_3) \ \mid \ |y'-x'|<a, \ 0<y_3-x_3<b \},
    \quad Q_{a,b}^+=Q_{a,b}^+(0).
\end{equation*}

\begin{definition}
  \label{def:2.1} (${C}^{k,\alpha}$ regularity)
Let $E$ be a bounded domain in ${\R}^{3}$. Given $k$,
$\alpha$, with $k\in \N$, $0<\alpha\leq 1$, we say that $E$
is of \textit{class ${C}^{k,\alpha}$ with
constants $r_0$, $M_{0}>0$}, if, for any $P \in \partial E$, there
exists a rigid transformation of coordinates under which we have
$P=0$ and
\begin{equation*}
  E \cap B_{r_{0}}(0)=\{x \in B_{r_{0}}(0)\ \mid \ x_{3}>\varphi(x')
  \},
\end{equation*}
where $\varphi$ is a ${C}^{k,\alpha}$ function on $B'_{r_{0}}$
satisfying
\begin{equation*}
\varphi(0)=0,
\end{equation*}
\begin{equation*}
\nabla \varphi (0)=0, \quad \hbox{when } k \geq 1,
\end{equation*}
\begin{equation*}
\|\varphi\|_{{C}^{k,\alpha}(B'_{r_{0}}(0))} \leq M_{0}r_{0}.
\end{equation*}

\medskip
\noindent When $k=0$, $\alpha=1$, we also say that $E$ is of
\textit{Lipschitz class with constants $r_{0}$, $M_{0}$}.
\end{definition}

\begin{remark}
  \label{rem:2.1}
  In this paper, we use the convention to normalize all norms in such a way that their
  terms are dimensionally homogeneous with the $L^{\infty}$ norm.  In Definition \ref{def:2.1}, for example, the $C^{k,\alpha}$ norm is intended as follows:
\begin{equation*}
  \|\varphi\|_{{C}^{k,\alpha}(B'_{r_{0}}(0))} =
  \sum_{i=0}^k r_0^i
  \|\nabla^i\varphi\|_{{L}^{\infty}(B'_{r_{0}}(0))}+
  r_0^{k+\alpha}|\nabla^k\varphi|_{\alpha, B'_{r_0}(0)},
\end{equation*}
where
\begin{equation*}
|\nabla^k\varphi|_{\alpha, B'_{r_0}(0)}= \sup_
{\overset{\scriptstyle x', \ y'\in B'_{r_0}(0)}{\scriptstyle
x'\neq y'}} \frac{|\nabla^k\varphi(x')-\nabla^k\varphi(y')|}
{|x'-y'|^\alpha}.
\end{equation*}

Similarly, for a vector function $u\in H^{1}(\Omega, \R^{3})$, we set
\begin{equation*}
\|u\|_{H^1(\Omega, \R^3)}=r_0^{-\frac{3}{2}}\left(\int_\Omega u^2
+r_0^2\int_\Omega|\nabla u|^2\right)^{\frac{1}{2}},
\end{equation*}
and a similar convention holds for trace norms such as
$\|\cdot\|_{H^{\frac{1}{2}}(\partial\Omega, \R^3)}$,
$\|\cdot\|_{H^{-\frac{1}{2}}(\partial\Omega, \R^3)}$.
\end{remark}

Let us recall that the Hausdorff distance $\dH(D,\tilde D)$ between two bounded closed sets $D, \tilde D \subset \R^3$, is defined as
\begin{equation*}
  \label{eq:Hausdorff}
  \dH(D,\tilde D) = \max\{\max_{x\in D} d(x,\tilde D), \max_{x\in \tilde D} d(x,D)\} \ .
\end{equation*}
%where $d(w,D) := \inf\limits_{y\in D} |w-y|$ for $w\in \tilde D$.

\medskip

Let $\M^{m\times n}$ be the space of $m\times n$ real valued matrices and denote by ${\cal L} (X, Y)$ the space of bounded linear operators between the Banach spaces $X$ and $Y$. In our case we consider $m=n=3$, and denote $\M^{3\times 3}$ with $\M^{3}$.

For any $3\times 3$ matrices $A$ and $B$, and for every $\C\in{\cal L} ({\M}^{3}, {\M}^{3})$, the component-wise action of the tensor $\C$ on $\M^{3}$ is
\begin{equation}
  \label{eq: tensor1}
  ({\C}A)_{ij} = \sum_{k,l=1}^{3} C_{ijkl}A_{kl},
\end{equation}
the scalar product between the matrices $A$ and $B$ is defined as follows
\begin{equation}
  \label{eq: tensor2}
  A \cdot B = \sum_{i,j=1}^{3} A_{ij}B_{ij},
\end{equation}
and finally, the Frobenius norm of a matrix $A$ is defined as 
\begin{equation}
  \label{eq: tensor3}
  |A|= (A \cdot A)^{\frac {1} {2}},
\end{equation}
where $C_{ijkl}$, $A_{ij}$ and $B_{ij}$ are the entries of $\C$,
$A$ and $B$ respectively.

For every pair of real $3$-vectors $v$ and $w$, we denote by $v
\otimes w$ the $3 \times 3$ matrix with entries
\begin{equation}
  \label{eq:diade}
  (v \otimes w)_{ij} = v_i w_j, \quad i,j=1,2,3.
\end{equation}
Finally, given a $3\times 3$ matrix $A\in \M^{3}$, we denote $\widehat A = \frac{1}{2}(A + A^{T})$, where $A^{T}$ is the transpose matrix of $A$.

\subsection{A priori information}\label{Subsec: a-priori}

\begin{enumerate}[label = (\roman*)]
    \item \textit{Domain}. Let $\Omega$ be a bounded domain in $\R^3$ such that
\begin{equation}\label{eqn: domainstart}
  \R^3\setminus \overline{\Omega}\ \hbox{is connected},
\end{equation}
\begin{equation}\label{diametro}
  \mbox{diam} (\Omega)\leq M_1 r_0,
\end{equation}
\begin{equation}\label{eqn: domainend}
  \Omega \ \hbox{is of class } C^{1,\alpha}, \ \hbox{with
  constants } \ r_0, \ M_0,
\end{equation}
where $r_0$, $M_0$, $M_1$ are given positive constants, and
$0<\alpha < 1$.
\item \textit{Boundary portion}. Let $\Sigma$ be a nonempty open portion of the boundary $\p\Omega$ with size at least $r_0$, namely, we assume that there exists a point $P_{\Sigma}\in \Sigma$ such that
\begin{equation}\label{eqn: ipSigma}
    \text{dist}(P_{\Sigma},\p\Omega\setminus \Sigma) \geq r_0.
\end{equation}
\item \textit{Inclusion}. Let $D$ be a domain contained in $\Omega$ satisfying
\begin{equation}\label{eqn: inclstart}
  \R^3 \setminus \overline{D} \ \hbox{is connected},
\end{equation}
\begin{equation}\label{eqn: inclend}
    D \ \hbox{is of class } C^{1,\alpha},\ \hbox{with constants } \ r_0, \ M_0,
\end{equation}
where $0<\alpha< 1$.
\item \textit{Material properties}. The body $\Omega$ is assumed to be made of linearly elastic, isotropic and homogeneous material, with elastic tensor $\C$ of components
\begin{equation}\label{eqn: definition_C}
    C_{ijkl} = \lambda \delta_{ij}\delta_{kl} + \mu(\delta_{ki}\delta_{lj} +\delta_{li}\delta_{kj}),
\end{equation}
where $\delta_{ij}$ is the Delta Kronecker. The constant Lamé moduli, $\lambda$ and $\mu$, satisfy the strong convexity conditions
\begin{equation}\label{eqn: LameCond}
    \mu\geq \alpha_0,\quad 2\mu + 3\lambda\geq \beta_0,
\end{equation}
where $\alpha_0, \beta_0 >0$ are given constants. Moreover, we assume the following upper bounds for the Lamé parameters
\begin{equation}\label{eqn: LameUpperBound}
    \mu\leq \overline{\mu},\quad \lambda\leq \overline{\lambda},
\end{equation}
where $\overline{\mu} ,\overline{\lambda}$ are given real numbers. 

Notice that \eqref{eqn: LameCond} is equivalent to
\begin{equation}
    \C A \cdot A \geq \xi_0 |A|^{2},
\end{equation}
for any  $3\times 3$ symmetric matrix $A$, with $\xi_0 = \min\{2\alpha_0, \beta_0\}$.

Similarly, the elastic inclusion $D$ is made of isotropic, homogeneous material having elastic tensor $\C^{I}$ with Lamé moduli $\lambda^{I}, \mu^{I}$ satisfying conditions \eqref{eqn: LameCond}, \eqref{eqn: LameUpperBound}, and such that
\begin{equation}\label{eqn: jump}
    (\lambda - \lambda^{I})^{2} + (\mu- \mu^{I})^{2} \geq \eta_0^{2}>0,
\end{equation}
for a given constant $\eta_0>0$.

We assume that the density $\rho_D$ is of the form
\begin{equation}\label{eqn: definition_rho}
    \rho_D(x) = \rho + (\ri - \rho)\chi_D(x),
\end{equation}
where the constants $\rho, \ri$ satisfy the following bounds
\begin{equation}\label{eqn: rho_bounds}
    0<\gamma_0 \leq \rho, \quad \ri \leq \gamma_0^{-1}
\end{equation}
and that the frequency $\omega$ satisfies
\begin{equation}\label{eqn: omega_bounds}
	0<\omega \leq \frac{\delta_0}{r_0}.
\end{equation}
Let us denote 
\begin{equation}\label{elastictensorwithinclusion}
    \C_D(x) = \C + (\C^{I} - \C)\chi_D(x), \quad x\in \Omega,
\end{equation}
the tensor in the elastic body containing the inclusion.
\end{enumerate}

In the sequel, we will refer to the set of parameters
\begin{equation*} 
	\quad M_1,\quad M_0, \quad \alpha, \quad \alpha_0, \quad \beta_0, \quad \bar\lambda, \quad \bar\mu, \quad \eta_0, \quad \gamma_0, \quad \delta_0
\end{equation*}
as the {\it a priori data}, whereas the dimensional parameter $r_0$ shall appear explicitly in all our estimates. In the sequel we shall denote by $C$ a positive constant which may change from line to line and only depends on the a priori data.

\section{The local Cauchy Data and the main stability result}
\label{sec:local-Cauchy}

Let $\Sigma$ be the open portion of the boundary $\partial \Omega$ as in (ii),  Section \ref{Subsec: a-priori}. Consider the trace space
\[
H^{1/2}_{co}(\Sigma) = \{f\in H^{1/2}(\p\Omega)\,|\,\text{supp}(f) \subset \Sigma\},
\]
and consider its closure under the $H^{1/2}$-norm:
\[
H^{1/2}_{00}(\Sigma) = \overline{H^{1/2}_{co}(\Sigma)}^{H^{1/2}(\p\Omega)}.
\]
We then consider its dual space defined as follows
\[
H^{-1/2}_{00}(\Sigma) = \{ g\in H^{-1/2}(\p\Omega)\,|\,\langle g,\varphi \rangle = 0,\,\,\forall \varphi \in H^{1/2}_{00}(\Sigma) \},
\]
where $\langle \cdot,\cdot \rangle$ represents the duality between the spaces $H^{1/2}(\p\Omega)$ and $H^{-1/2}(\p\Omega)$,  based on the inner product on $L^{2}(\p\Omega)$
\[
\langle g,\varphi \rangle = \int_{\p\Omega} g\cdot \varphi.
\]

%Let $D\subset \Omega$ be an inclusion. 

\begin{definition}
	The set of Cauchy data on $\Sigma$ associated to the inclusion $D$ is defined as the set
	\[
	\begin{split}
		\mathcal{C}^{\Sigma}_D := \{(&f,g)\in H^{1/2}_{00}(\Sigma)\times H^{-1/2}(\p\Omega)\,:\\
		&\exists u\in H^{1}(\Omega) \text{ weak solution of }\\
		&\Div(\C_D \nh u) + \rho_D \omega^{2} u = 0 \text{ in }\Omega,\\
		&u|_{\p\Omega} = f, \quad (\C\nh u) n|_{\p\Omega} = g \}
	\end{split}
	\]
\end{definition}

Consider the dual of the trace space of functions with compact support in $\p\Omega\setminus\overline{\Sigma}$:
\[
H^{-1/2}_{00}(\p\Omega\setminus\overline{\Sigma}):=\left\{ \psi\in H^{-1/2}(\p\Omega)\,:\, \langle \psi,\varphi\rangle =0, \quad \forall\: \varphi\in H^{1/2}_{00}(\Sigma)\,\right\}.
\]

We define $H^{1/2}(\p\Omega)\vert_{\Sigma}$ and $H^{-1/2}(\p\Omega)\vert_{\Sigma}$ as the restrictions to $\Sigma$ of the trace spaces $H^{1/2}(\p\Omega)$ and $H^{-1/2}(\p\Omega)$, respectively. These trace spaces can be defined equivalently as quotient spaces by the following relation:
\[
\varphi_1 \sim \varphi_2 \qquad \iff \qquad \varphi_1 - \varphi_2 \in H^{1/2}_{00}(\p\Omega\setminus\overline{\Sigma}),
\]
so that
\[
H^{1/2}(\p\Omega)\vert_{\Sigma} = H^{1/2}(\p\Omega) \slash \sim\,\, = H^{1/2}(\p\Omega) \slash H^{1/2}_{00}(\p\Omega\setminus\overline{\Sigma}).
\]
Similarly,
\[
H^{-1/2}(\p\Omega)\vert_{\Sigma} = H^{-1/2}(\p\Omega)\slash H^{-1/2}_{00}(\p\Omega\setminus\overline{\Sigma}).
\]

\begin{definition}\label{def5: localCD}
	The \emph{local Cauchy data} on $\Sigma$  associated with the inclusion $D$, with the first component vanishing at $\p\Omega\setminus\overline{\Sigma}$, is defined as
	\begin{equation}
		\begin{split}
			\mathcal{C}_D^{\Sigma}(\Sigma)= \Big\{ &(f,g)\in H^{1/2}_{00}(\Sigma)\times H^{-1/2}(\p\Omega)\vert_{\Sigma}\ :\
			\exists\: u\in H^1(\Omega)\: \text{weak solution of}\\
			&\Div(\C_D\:\nh u) + \rho \omega^{2}\:u = 0 \qquad \text{in}\:\Omega,\\
			&u|_{\p\Omega} = f,\notag\\
			&\langle (\C_D\nh u) n\vert_{\p\Omega},\varphi\rangle= \langle g, \varphi \rangle \qquad \text{for all } \varphi \in H^{1/2}_{00}(\Sigma) \Big\}.
		\end{split}
	\end{equation}
\end{definition}
It is important to note that $\mathcal{C}^{\Sigma}_D(\Sigma)$ is a subspace of the product space 
\begin{equation}\label{eqn5: productH}
	\mathcal{H} := H^{1/2}_{00}(\Sigma)\times H^{-1/2}(\p\Omega)\vert_{\Sigma},
\end{equation}
which is a Hilbert space with norm
\begin{equation}\label{eqn5: normH}
	\|(f,g)\|_{\mathcal{H}}= \Big( \|f\|^2_{H^{1/2}_{00}(\Sigma)} +r_0^2 \|g\|^2_{H^{-1/2}(\p\Omega)\vert_{\Sigma}}\Big)^{1/2}\qquad \textnormal{for each}\:(f,g)\in\mathcal{H}.
\end{equation}
Given two inclusions $D_1$, $D_2$ satisfying \eqref{eqn: inclstart}--	\eqref{eqn: inclend} , we denote by $\mathcal{C}_i$ the local Cauchy data $\mathcal{C}^{\Sigma}_{D_i}$ associated with the inclusion $D_i$, for $i=1,2$. 

To compare two local Cauchy data, we use the definition of the distance between closed subspaces of a Hilbert space. Given $\mathcal{F}$ and $\mathcal{G}$, two subspaces of a Hilbert space, the \emph{distance} or \emph{aperture} between them is given by the following formula:
\begin{equation}\label{eqn5: aperture}
	d(\F,\G)=\max\Big\{ \sup_{h\in\G, h\neq 0} \inf_{k\in \F} \frac{\|h-k\|}{\|h\|}, \sup_{k\in\F, k\neq 0} \inf_{h\in \G} \frac{\|h-k\|}{\|k\|} \Big\}.
\end{equation}
It is well known \cite[Corollary 2.13]{KJA2010} that if $d(\F,\G)<1$, then the two quantities in \eqref{eqn5: aperture} coincide. Hence, we can assume that
\[
d(\F,\G) = \sup_{h\in\G, h\neq 0} \inf_{k\in \F} \frac{\|h-k\|}{\|h\|}.
\]
In our context, the distance between two local Cauchy data $\mathcal{C}_1$ and $\mathcal{C}_2$ is considered smaller than 1, then we can assume that it has the form
\begin{equation}\label{eqn5: cauchy}
	d(\mathcal{C}_1,\mathcal{C}_2)= \sup_{(f_1,g_1)\in \mathcal{C}_1\setminus\{(0,0)\}} \inf_{(f_2,g_2)\in \mathcal{C}_2} \frac{\|(f_1,g_1)-(f_2,g_2)\|_{\mathcal{H}}}{\|(f_1,g_1)\|_{\mathcal{H}}}.
\end{equation}

Let $u_i\in H^{1}(\Omega)$ for $i=1,2$ be a weak solution to
\begin{equation}
	\Div(\C_{D_i}\nh u_i) + \rho_{D_i} \omega^2 u_i = 0, \qquad \text{in }\Omega.
\end{equation}
The weak formulation reads as follows:
\begin{equation}\label{eqn: weakFormUi}
	\int_{\Omega} \C_{D_i}\nh u_i\cdot \nh \psi - \int_{\Omega} \rho_{D_i}\omega^{2} u_i \psi = \langle ( \C_{D_i} \nh u_i) n, \psi|_{\partial \Omega}\rangle,\qquad\text{for all }\psi\in H^{1}(\Omega).
\end{equation}
By choosing $i=1,2$  and $\psi=u_2, u_1$ in \eqref{eqn: weakFormUi}, and by subtracting the two equalities, we derive
\begin{equation}\label{eqn: AlIdentity}
	\begin{split}
		\int_{\Omega} &(\C_{D_2} - \C_{D_1})\nh u_1 \cdot \nh u_2 - \int_{\Omega} (\rho_{D_2} - \rho_{D_1})\omega^{2} u_1 u_2 \\
		&= \langle (\C_{D_2} \nh u_2) n|_{\p\Omega}, u_1|_{\p\Omega} \rangle - \langle (\C_{D_1} \nh u_1) n|_{\p\Omega}, u_2|_{\p\Omega} \rangle.
	\end{split}
\end{equation}
Let $v_i$ for $i=1,2$ be a weak solution to
\begin{equation}
	\Div(\C_{D_i}\nh v_i) + \rho_{D_i} \omega^{2} v_i = 0, \qquad \text{in }\Omega.
\end{equation}
The corresponding weak formulation is
\begin{equation}\label{eqn: weakFormVi}
	\int_{\Omega} \C_{D_i}\nh v_i\cdot \nh \psi - \int_{\Omega} \rho_{D_i}\omega^{2} v_i \psi = \langle (\C_{D_i} \nh v_i)n, \psi|_{\partial \Omega}\rangle,\qquad\text{for all }\psi\in H^{1}(\Omega).
\end{equation}
Choosing $i=2$ and $\psi=u_2$, we derive, using the symmetry of $\C_2$,
\[
\begin{split}
	\langle (\C_{D_2} \nh v_2) n, u_2|_{\partial \Omega}\rangle 
	&= \int_{\Omega} \C_{D_2}\nh v_2\cdot \nh u_2 - \int_{\Omega} \rho_{D_2}\omega^{2} v_2 u_2\\
	&= \int_{\Omega} \C_{D_2}\nh u_2\cdot \nh v_2 - \int_{\Omega} \rho_{D_2}\omega^{2} u_2 v_2\\
	&= \langle (\C_{D_2} \nh u_2) n, v_2|_{\partial \Omega}\rangle.
\end{split}
\]
Hence
\begin{equation}\label{eqn: weakBoundary}
	\langle (\C_{D_2} \nh v_2) n, u_2|_{\partial \Omega}\rangle = \langle (\C_{D_2} \nh u_2) n, v_2|_{\partial \Omega}\rangle.
\end{equation}
By summing \eqref{eqn: AlIdentity} and \eqref{eqn: weakBoundary}, we derive
\begin{equation}\label{eqn: AlIdentity2}
	\begin{split}
		\int_{\Omega} &(\C_{D_2} - \C_{D_1})\nh u_1 \cdot \nh u_2 - \int_{\Omega} (\rho_{D_2} - \rho_{D_1})\omega^{2} u_1 u_2 \\
		&= \left\langle (\C_{D_2} \nh u_2) n|_{\p\Omega}, (u_1-v_2)|_{\p\Omega} \right\rangle - \left\langle \Big( (\C_{D_1} \nh u_1) n|_{\p\Omega} - (\C_{D_2}\nh v_2) n|_{\p\Omega}\Big), u_2|_{\p\Omega} \right\rangle.
	\end{split}
\end{equation}
Our goal is now to estimate from above \eqref{eqn: AlIdentity2} in terms of the local Cauchy data sets.

By applying the Cauchy-Schwarz inequality and the definition of aperture, we derive
\begin{equation}\label{stimaCauchy}
	\begin{split}
		\Big|\int_{\Omega} &(\C_{D_2} - \C_{D_1})\nh u_1 \cdot \nh u_2 - \int_{\Omega} (\rho_{D_2} - \rho_{D_1})\omega^{2} u_1 u_2 \Big|\\
		&\leq d(\CC_1,\CC_2) \|(u_1,(\C_1\nh u_1)n)\|_{\mathcal{H}}
		\cdot \|(u_2,(\C_2\nh u_2) n)\|_{\mathcal{H}}.
	\end{split}
\end{equation}
Indeed,
\begin{equation}\label{AleIdCD}
	\begin{split}
		\Big|\int_{\Omega} &(\C_{D_2} - \C_{D_1})\nh u_1 \cdot \nh u_2 - \int_{\Omega} (\rho_{D_2} - \rho_{D_1})\omega^{2} u_1 u_2 \Big|\\
		&\leq \Big|\langle (\C_{D_2}\nh u_2)n|_{\partial \Omega}, (u_1 - v_2)|_{\p\Omega}\rangle \Big| + \Big|\langle ((\C_{D_1}\nh u_1) n - (\C_{D_2}\nh v_2) n)|_{\partial \Omega}, u_2|_{\p\Omega}\rangle \Big|\\
		&\leq \|(\C_{D_2}\nh u_2) n \| \cdot \|u_1 - v_2\| + \|(\C_{D_1}\nh u_1) n - (\C_{D_2}\nh v_2) n \| \cdot \|u_2\|\\
		&\leq \|(u_2, (\C_{D_2}\nh u_2)n)\| \cdot \Big( \|u_1 - v_2\| + \|(\C_{D_1}\nh u_1)n - (\C_{D_2}\nh v_2) n \| \Big)\\
		&= \|(u_2,(\C_{D_2}\nh u_2) n)\|\cdot \|(u_1,(\C_{D_1}\nh u_1) n)- (v_2,(\C_{D_2}\nh v_2) n)\|\\
		&\leq d(\CC_1,\CC_2) \|(u_1,(\C_1\nh u_1) n)\|_{\mathcal{H}}
		\cdot \|(u_2,(\C_2\nh u_2) n)\|_{\mathcal{H}}.
	\end{split}
\end{equation}

Our main result, Theorem \ref{thm: stability2}, states a logarithmic stability estimate of the inclusion encompassing the resonance setting regime.

\begin{theorem}
	\label{thm: stability2}
	
	Let $\Omega \subset \R^3$ satisfy
	\eqref{eqn: domainstart}--\eqref{eqn: domainend} and let $D_1$, $D_2$ be two inclusions contained in $\Omega$ satisfying
	\eqref{eqn: inclstart}--\eqref{eqn: inclend}. 
	Let $\Sigma\subset\partial\Omega$ satisfy \eqref{eqn: ipSigma}.
	Let $\C_{D_1}$ and $\C_{D_2}$ be the isotropic elastic tensors as in \eqref{elastictensorwithinclusion} with $D=D_i$ $i=1,2$, respectively, satisfying  \eqref{eqn: LameCond}, \eqref{eqn: LameUpperBound}, \eqref{eqn: jump}  and let $\rho_{D_1}, \rho_{D_2}$ be the densities as in \eqref{eqn: definition_rho} satisfying \eqref{eqn: rho_bounds}. 
	Let the frequency $\omega$ satisfy \eqref{eqn: omega_bounds}.
	Let $\CC_1$ and $\CC_2$ be the local Cauchy data associated with $D_1, D_2$, respectively. If, for some $\epsilon$, $0<\epsilon<\min\{1,r_0\}$,

	% \begin{equation}
		%   \label{eq:4.1}
		%    \| \Lambda_{D_1}-\Lambda_{D_2}\|_{\mathcal{L}(H^{1/2}(\partial \Omega), H^{-1/2}(\partial
			%    \Omega))}\leq \frac{\epsilon}{r_0},
		% \end{equation}
	\begin{equation}
		d(\CC_1, \CC_2) \leq \frac{\epsilon}{r_0},
	\end{equation}
	then
	\begin{equation}
		\label{eq:4.2}
		d_H(\partial D_1, \partial D_2) \leq r_0 \omega(\epsilon),
	\end{equation}
	where $\omega$ is an
	increasing function on $[0,+\infty)$ satisfying
	\begin{equation}
		\label{eq:4.3}
		\omega(t) \leq C |\log t |^{-\eta}, \ \hbox{for every } \
		0<t<1,
	\end{equation}
	where $C>0$ and $\eta$, with $0<\eta\leq 1$, are constants depending on the a priori data.
\end{theorem}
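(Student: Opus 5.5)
The proof follows the Alessandrini--Di Cristo--Morassi--Rosset scheme \cite{AleDiCMorRos14} for the static elastic inclusion problem. The identity \eqref{stimaCauchy} replaces the Dirichlet-to-Neumann estimate. The advantage of this identity is that it never requires solvability of the direct problem: it holds for any pair of solutions $u_1$, $u_2$, so resonances are harmless.

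\textbf{Step 1: geometric reduction.} By the standard argument, after possibly swapping $D_1$ and $D_2$, we may assume $d_H(\partial D_1,\partial D_2)$ is comparable to $d_\mu = d_H(\overline{\Omega\setminus G}, \ldots)$. Here $G$ is the connected component of $\Omega\setminus\overline{(D_1\cup D_2)}$ whose boundary contains $\partial\Omega$. Using the $C^{1,\alpha}$ regularity, we find a point $P\in\partial D_1$, say, at distance $\simeq d_\mu$ from $D_2$. It can be reached from $\Sigma$ by a chain of balls inside $G$; the chain is built by augmenting $\Omega$ near $\Sigma$ to a larger domain $\Omega_0 \supset \Omega$.

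\textbf{Step 2: singular solutions.} For $y\in\Omega_0\setminus\overline\Omega$ we need a singular solution $\Gamma^i(\cdot,y)$ of $\Div(\C_{D_i}\nh\,\cdot)+\rho_{D_i}\omega^2\,\cdot=0$ in $\Omega_0$ (with the coefficients extended by $\C$ and $\rho$). Since $\omega$ may be resonant, we do not use Neumann or Dirichlet Green's functions. Instead, we construct it locally: start from the fundamental solution of the Lamé system with the transmission (piecewise constant, flat interface) correction, as in \cite{AleDiCMorRos14}. The $\omega^2$ term is a lower-order perturbation, which we handle by a Levi-type correction. This produces a solution near $\overline{D_i}$ whose blow-up behaviour near $P$ satisfies $|\nh\Gamma^i(x,y)|\simeq |x-y|^{-3}$ from below on a cone, together with $C|x-y|^{-3}$ bounds from above. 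If a global solution is needed, the Cauchy-data formulation only requires \emph{some} $H^1(\Omega)$ solutions. So we take $u_i = \Gamma^i(\cdot,y)$ restricted to $\Omega$, for $y$ outside $\Omega$ near $\Sigma$ (cutoffs keep the traces in $H^{1/2}_{00}(\Sigma)$). Their Cauchy data belong to $\CC_i$, and their $\mathcal{H}$ norms are bounded by $C\,\mathrm{dist}(y,\Omega)^{-\kappa}$.

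\textbf{Step 3: propagation of smallness.} Insert $u_1=\Gamma^1(\cdot,y)$ and $u_2=\Gamma^2(\cdot,z)$ into \eqref{stimaCauchy} and define
\[
S(y,z)=\int_{\Omega}(\C_{D_2}-\C_{D_1})\nh\Gamma^1(\cdot,y)\cdot\nh\Gamma^2(\cdot,z)-\omega^2\int_\Omega(\rho_{D_2}-\rho_{D_1})\Gamma^1(\cdot,y)\cdot\Gamma^2(\cdot,z).
\]
This $S$ solves the Lamé--Helmholtz system in each variable in $\Omega_0\setminus\overline{(D_1\cup D_2)}$. It is small for $y,z$ outside $\Omega$, with $|S|\le C\epsilon\, r_0^{-1}\,(\text{dist})^{-2\kappa}$, and it is a priori bounded by a power of the distances. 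Iterated three-spheres inequalities for the elastic system with zeroth-order term then propagate the smallness along the chain of balls up to points $y=z=P-h\nu$. This uses the unique continuation results of Alessandrini--Morassi--Rosset type for Lamé with $C^{0,1}$ constant coefficients plus a potential; at fixed frequency within \eqref{eqn: omega_bounds} the constants are uniform. The outcome is $|S(y,y)|\le C h^{-A}\,\epsilon^{\tau h^{B}}$, with the standard logarithmic loss coming from the shrinking radii near $\partial D_1$.

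\textbf{Step 4: lower bound and the main obstacle.} Near $P$, split $S(y,y)$ into the integral over $D_1\cap B_r(P)$ and the rest. The integral over $D_1\cap B_r(P)$ is bounded below by $c\,h^{-3}$. This follows from the jump condition \eqref{eqn: jump} and the asymptotics of the transmission fundamental solution, via the same algebraic lemma (positivity of the quadratic form in $\lambda-\lambda^I$, $\mu-\mu^I$) as in \cite{AleDiCMorRos14}. The remaining terms are $O(h^{-3+\alpha})$ or $O(d_\mu^{-3})$, and the density term is only $O(h^{-1})$ since $\Gamma$ itself has singularity of order one. Comparing the upper and lower bounds and choosing $h$ optimally gives $d_\mu\le C|\log\epsilon|^{-\eta}$. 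The main obstacle is Step 2 together with this lower bound: we must construct singular solutions with sharp asymptotics for the transmission problem with the $\omega^2$ term, without invoking a Green function that may fail to exist at resonant frequencies. I would attempt this first by a local construction in a small ball around $P$, followed by extension to $\Omega$ using only that the Cauchy data set is defined via arbitrary solutions.
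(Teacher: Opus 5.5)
\textbf{There is a genuine gap in Step 2, and it also undermines Step 3.} You flag Step 2 yourself as the main obstacle.

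An element of $\mathcal{C}_i$ must come from some $u\in H^1(\Omega)$ with two properties:
\begin{itemize}
\item it solves $\Div(\C_{D_i}\nh u)+\rho_{D_i}\omega^2u=0$ in \emph{all} of $\Omega$, across the unknown $D_i$ and up to $\partial\Omega\setminus\Sigma$;
\item its trace lies in $H^{1/2}_{00}(\Sigma)$.
\end{itemize}
A singular solution built near $\overline{D_i}$, or in a small ball around $P$, is not defined on $\Omega$. Multiplying by a cut-off to kill the trace on $\partial\Omega\setminus\Sigma$ creates a source term. The resulting pair is then not in $\mathcal{C}_i$, and \eqref{stimaCauchy} cannot be invoked. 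That $\mathcal{C}_i$ is defined through arbitrary solutions does not remove the need to solve a \emph{global} problem in $\Omega$, with a pole outside $\Omega$ and zero Dirichlet data on $\partial\Omega\setminus\Sigma$.

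Propagating smallness also requires $S(y,z)$ to solve the Lam\'e--Helmholtz system in $y$ and in $z$ on the whole region of $\Omega^\sharp\setminus\Omega_D$ joining $B_{r_\sharp}(P_0)$ to $P$. This needs one coherent family $y\mapsto\Gamma^i(\cdot,y)$ on that region, with the symmetry $\Gamma^i(x,y)=\Gamma^i(y,x)^T$. A construction localized around $P$ cannot provide this, so your claim in Step 3 that $S$ solves the system in each variable is unsupported.

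\textbf{What the paper does instead.} It uses the Dirichlet Green's function $G_i$ of $\Div(\C_{D_i}\nh\,\cdot)+\rho_{D_i}\omega^2$ on the augmented domain $\Omega^\sharp$, written as $G_i(\cdot,y)l=\Gamma^{D_i}(\cdot,y)l+w$. Here $\Gamma^{D_i}$ is the static transmission fundamental matrix of \cite{AleDiCMorRos14}, so all sharp asymptotics are inherited from the case $\omega=0$. The corrector $w$ absorbs both the $\omega^2$ term and the boundary condition, with $\|w\|_{H^1(\Omega^\sharp)}\le Cr_0^{-1}$ for poles away from $\partial\Omega^\sharp$.
\begin{itemize}
\item For $y\in B_{r_\sharp}(P_0)\subset\Omega_0$, $G_i(\cdot,y)|_{\Omega}$ is an honest solution in $\Omega$ vanishing on $\partial\Omega\setminus\Sigma_0$. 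This gives $|S|\le C\epsilon/r_0$ there.
\item The symmetry of $G_i$ gives \eqref{divf}--\eqref{divftilde}.
\end{itemize}
The paper simply asserts that the Dirichlet problem for $w$ is uniquely solvable with a bound depending only on the a priori data. In effect it treats $\Omega^\sharp$ as non-resonant. Your worry about resonance is therefore well founded, but your sketch does not replace this step.

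\textbf{Two further errors.}
\begin{itemize}
\item In $\R^3$ one has $|\nh\Gamma(x,y)|\sim|x-y|^{-2}$, so the leading term is of order $h^{-1}$, not $h^{-3}$. At that order, an $O(h^{-1})$ density remainder is not negligible. You need the sharper bounds $O(r_0^{-1/2}h^{-1/2})$ and $O(r_0^{-1})$ that the paper proves for $R_4,R_5,R_6$.
\item The jump condition \eqref{eqn: jump} leaves $\C^I-\C$ without a sign, so there is no positivity argument at $y=z$. Following \cite{AleDiCMorRos14}, the paper uses distinct poles $y_h=P-he_3$ and $w_h=P-\lambda_w he_3$. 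It picks $\lambda_w\in\{2/3,3/4,4/5\}$ so that the leading coefficient stays away from zero.
\end{itemize}
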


\section{Singular solutions}

\subsection{Geometrical results}

Let us introduce an augmented domain $\oms$. As in \cite[Section 6]{ARRV}, we can construct an open set $\Omega_0$ exterior to $\Omega$ such that:
% %
\begin{enumerate}
	\item $\Sigma_0=\partial \Omega_0 \cap \partial \Omega\subset\Sigma$;
	\item the size of $\Sigma_0$ is at least $ \frac{r_0}{2}$;
	\item $\oms=\overset{\circ}{\overline{\Omega\cup \Omega_0}}$ is an open and connected set having Lipschitz boundary with constants $r_\sharp$, $M_\sharp$,  $M_\sharp \geq M_0$,
	\begin{equation}
		\label{eq:def-r-sharp}
		r_\sharp=\zeta r_0, \quad 0<\zeta \leq 1,
	\end{equation}
	with $\zeta$ and $M_\sharp$ only depending on $M_0$;
	\item there exists $P_0\in\om_0$ such that
	\begin{equation}\label{eq:ball}
		B_{2r_\sharp}(P_0)\subset\om_0
	\end{equation}
	and, denoting by $S$ the segment joining $P_0$ and $P_\Sigma$, 
	\begin{equation}\label{eq:tubo-P0-Psigma}
		\{ x \in \RR^3 \ | \ \text{dist}(x, S) \leq 2r_\sharp  \} \subset \Omega^\sharp.
	\end{equation}
\end{enumerate}
We extend the elasticity tensor {}from $\Omega$ to $\oms$ defining it as $\C$ in $\om_0\cup \Sigma_0$.

Let $\G$ be the connected component of $\oms\setminus(D_1\cup D_2)$ such that $\p\G \cap \p\oms\neq \emptyset$. Let
\begin{equation}
    \Omega_D = \oms\setminus \overline{\G}.
\end{equation}

Let us introduce as in \cite{AleDiCMorRos14} the modified distance
\begin{equation}
    d_{\mu}(D_1,D_2) = \max \Big\{\max_{x\in \p D_1 \cap \p \Omega_D} dist(x,D_2), \max_{x\in \p D_2 \cap \p \Omega_D} dist(x,D_1) \Big\}.
\end{equation}

\begin{lemma}[Proposition 3.3 in \cite{Al-DC05}]
	\label{lemma-dist-modif}
    Under the assumptions of Theorem \ref{thm: stability2}, there exists a constant $c_0\geq 1$, only depending on the a priori data, such that
    \begin{equation}
        \dH(\p D_1, \p D_2) \leq c_0 d_{\mu}(D_1, D_2).
    \end{equation}
\end{lemma}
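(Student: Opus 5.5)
This is a purely geometric statement: the elasticity system plays no role, and only the regularity assumptions \eqref{eqn: inclstart}--\eqref{eqn: inclend} on $D_1,D_2$ and the construction of $\oms$, $\G$, $\Omega_D$ enter. The plan is to follow \cite{Al-DC05}. Set $d=\dH(\p D_1,\p D_2)$. By symmetry we may assume there is $x_0\in\p D_1$ with $\mathrm{dist}(x_0,\p D_2)=d$. If $x_0\in\p\Omega_D$, then $\mathrm{dist}(x_0,D_2)\le d_\mu$. Moreover, $x_0\notin D_2$, since otherwise $x_0$ would lie in $D_2$ away from $\p\G$, contradicting $x_0\in\p\G$. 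Hence $d=\mathrm{dist}(x_0,\overline{D_2})\le d_\mu$, and the claim holds with $c_0=1$. The real work is the case where $x_0$ lies on a part of $\p D_1$ that does not face the unbounded component $\G$.

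For the second case I would first reduce to $d\le d_1 r_0$ for a small $d_1$ depending on the a priori data. For larger $d$ the bound $d\le \mathrm{diam}(\Omega)\le M_1 r_0$ suffices, once one shows separately that $d_\mu\ge c r_0$. That lower bound should follow from the same argument below, because a large Hausdorff gap forces a boundary point of $\Omega_D$ far from the other inclusion. In the small-$d$ regime, use the $C^{1,\alpha}$ graph representation of $\p D_1$ near $x_0$ in the cylinder $Q_{r_0,M_0r_0}(x_0)$. If $x_0\notin\p\Omega_D$, then $x_0$ is not reachable from $\G$, so it is shielded by $D_2$. Concretely, the ball $B_d(x_0)$ meets no point of $\p D_2$, so it lies either entirely inside $D_2$ or entirely outside $\overline{D_2}$. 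If it lay outside $\overline{D_2}$ and also outside $\Omega_D$, it would contain points of $\G$ and $x_0$ would belong to $\p\Omega_D$. So $B_d(x_0)\subset D_2$ up to its intersection with $D_1$, and in particular the exterior normal segment from $x_0$ of length $d$ lies in $D_2\setminus\overline{D_1}$.

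Now go outward from $x_0$ along the normal $\nu(x_0)$ of $D_1$ until the first point $y\in\p D_2$ is reached, at distance $t\ge d$. At $y$, the set $\p D_2$ is $C^{1,\alpha}$ with the same constants and $\R^3\setminus\overline{D_2}$ is connected by \eqref{eqn: inclstart}. Using the uniform cone (or graph) condition, one shows that $y$, or a point $y'$ within distance $C d$ of $y$, lies on $\p\Omega_D$. Taking the first point of $\p D_2$ along a path from $y$ inside $\R^3\setminus\overline{D_2}$ toward $\p\oms$ that avoids $D_1$ should produce such a point. Then $\mathrm{dist}(y',D_1)\ge\mathrm{dist}(y',\overline{D_1})\ge c\,t\ge c\,d$. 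The last inequality uses the fact that, in the small-$d$ regime, the $C^{1,\alpha}$ graph of $\p D_1$ stays within a cone around the tangent plane at $x_0$. Hence $d_\mu\ge c\,d$, which gives the claim with $c_0=1/c$.

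The main obstacle is the topological step: producing a point of $\p D_2\cap\p\Omega_D$ that remains quantitatively far from $D_1$, since a priori $D_1$ could wrap around $D_2$. I would handle it as in \cite{Al-DC05}, through a ``distance shrinking'' argument. Assume by contradiction that $d_\mu< d/c_0$. Then every point of $\p\Omega_D$ is within $d/c_0$ of both boundaries. Then show, using the connectedness of $\R^3\setminus\overline{D_i}$ and the fact that $\p\Omega_D\subset\p D_1\cup\p D_2$, that the regions trapped between the two boundaries cannot have thickness $d$ while their openings toward $\G$ have width $\le d/c_0$. The $C^{1,\alpha}$ regularity with constants $r_0,M_0$ forbids such necks; this is where the a priori data determine $c_0$.
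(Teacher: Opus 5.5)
The content of the lemma is your second case, and that case does not go through. Your first case ($x_0\in\p\Omega_D$) is correct.

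\textbf{The dichotomy is unjustified.} From $x_0\notin\p\Omega_D$ you conclude $B_d(x_0)\subset D_2$. But when $B_d(x_0)\cap\overline{D_2}=\emptyset$, the exterior side of $\p D_1$ at $x_0$ need not lie in $\G$. It may lie in a component of $\oms\setminus(\overline{D_1}\cup\overline{D_2})$ other than $\G$, namely a pocket enclosed jointly by $D_1$ and $D_2$. For instance, locally $D_1=\{x_3>\varphi(x')\}$ and $D_2=\{x_3<0\}$ with $\varphi$ having a bump above $\{x_3=0\}$. Such a pocket is part of $\Omega_D$. Then $x_0\notin\p\Omega_D$ although $x_0\notin\overline{D_2}$, and your normal segment is not in $D_2$. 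Excluding this configuration at the maximising point needs a global argument, which you do not supply.

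\textbf{The subcase $x_0\in D_2$ is asserted, not proved.} The decisive claim is that some $y'\in\p\Omega_D$ within $Cd$ of the first hitting point $y$ satisfies $\mathrm{dist}(y',D_1)\ge c\,d$. Three things are missing. First, the path you describe (inside $\R^3\setminus\overline{D_2}$, avoiding $D_1$, reaching $\p\oms$) need not exist, and its existence is precisely what must be proved. Second, nothing bounds $t$ by $Cd$, so the normal ray may leave the chart or re-enter $D_1$ before reaching $\p D_2$. Third, $\mathrm{dist}(y',D_1)\ge c\,t-Cd$ gives nothing when $t$ is comparable to $d$.

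\textbf{The neck heuristic is false as stated.} Channels between $\p D_1$ and $\p D_2$ can be arbitrarily thin, since the two inclusions are unrelated and may touch or overlap. The a priori regularity cannot forbid them, and an enclosed pocket has no opening at all. The large-$d$ regime is likewise only deferred.

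\textbf{The missing ingredient} is to use the regularity of one inclusion at a time. If $D$ is of Lipschitz class with constants $r_0,M_0$ and $\R^3\setminus\overline{D}$ is connected, then $E_\delta(D)=\{x\in\R^3:\mathrm{dist}(x,D)>\delta\}$ is connected for $0<\delta\le c\,r_0$ with $c=c(M_0)$. This is the standard no-thin-necks property of the complement of a single Lipschitz domain: paths are pushed away from $\p D$ along a transversal field. Then argue by last exit.

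\emph{Case $x_0\notin\overline{D_2}$.} Join $x_0$ to a point outside $\overline{\oms}$ by a path in $E_\delta(D_2)$, with $\delta=\frac12\min\{d,c\,r_0\}$. The last point $z$ of the path in $\overline{\Omega_D}$ lies in $\p\Omega_D\subset\p D_1\cup\p D_2$ and not in $\overline{D_2}$. Hence $z\in\p D_1\cap\p\Omega_D$ and $d_\mu(D_1,D_2)\ge\mathrm{dist}(z,D_2)>\delta$. This covers both your first case and the pockets.

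\emph{Case $x_0\in D_2$.} First move to $y=x_0+s\nu(x_0)$ with $s=\frac12\min\{d,r_0\}$. Then $y\in B_d(x_0)\subset D_2\subset\Omega_D$. By the graph representation of $\p D_1$ at $x_0$, $\mathrm{dist}(y,D_1)\ge s/\sqrt{1+M_0^2}$. Run the same argument in $E_{\delta'}(D_1)$ with $\delta'=\frac12\min\{s/\sqrt{1+M_0^2},c\,r_0\}$. This gives a point of $\p D_2\cap\p\Omega_D$ at distance $>\delta'$ from $D_1$.

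Since $d\le M_1r_0$, both cases give $d\le c_0\,d_\mu(D_1,D_2)$ with $c_0$ depending only on $M_0$ and $M_1$. This needs no small/large splitting and no local comparison of the two boundaries.
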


For simplicity, let us assume that 
\[
d_{\mu}(D_1,D_2) = \max_{x\in \p D_1 \cap \p \Omega_D} dist(x,D_2).
\]

Let us denote by $C(Q,v,h,\vartheta)$ the closed truncated cone with vertex at $Q$, axis along the direction $v$, height $h$ and aperture $2\vartheta$.

\begin{lemma}[Lemma 4.2 in \cite{AleDiCMorRos14}]\label{lemma 4.2}
    Under the assumptions of Lemma \ref{lemma-dist-modif}, there exist positive constants $\bar{d}$, $c_1$, where $\frac{\bar{d}}{r_0}$ and $c_1$ depend on the a priori data only, and there exists a point $P\in \p D_1$ satisfying
    \begin{equation}
        c_1 d_{\mu}(D_1,D_2) \leq dist(P,D_2)
    \end{equation}
    and such that there exists a path $\gamma$ joining the point $P_0 \in \Omega_0$ introduced above to $P+\overline d \nu$, where $\nu$ is the outer unit normal to $D_1$ at $P$, such that
    \begin{equation}
    	\label{eq:vermeV}
    	V(\gamma) = \bigcup_{S\in \gamma} B_R(S) \cup C\Big(P, \nu, \frac{d^{2}-R^{2}}{d},\arcsin\frac{R}{d} \Big) \subset \Omega^{\sharp}\setminus \Omega_D,
    \end{equation}

    provided $R=\frac{\bar{d}}{\sqrt{1+L_0^{2}}}$ , where $L_0$, only depending on $M_0$ and $\alpha$, is a constant such that $0<L_0\leq M_0$.
\end{lemma}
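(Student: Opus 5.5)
The plan is to derive the statement from two facts: the uniform $C^{1,\alpha}$ regularity of $D_1$ and $D_2$, and the geometry of the set $\G$, the connected component of $\oms\setminus(D_1\cup D_2)$ that touches $\p\oms$. In outline, we first fix $\overline d$ and the cone, then choose $P$ so that the cone lies in $\G$ (Steps 1–2), and finally build the path $\gamma$ (Step 3). Step 2 is the one I expect to be hard. As in the text, assume $d_\mu(D_1,D_2)=\max_{x\in\p D_1\cap\p\Omega_D}\mathrm{dist}(x,D_2)$, and let $x_0\in\p D_1\cap\p\Omega_D$ attain this maximum. Then $B_{d_\mu}(x_0)\cap D_2=\emptyset$, and $x_0\in\p\G$.

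\emph{Step 1 (fixing $\overline d$ and the cone).} Definition \ref{def:2.1} with $k=1$ gives, at every $Q\in\p D_1$, a local graph representation $x_3>\varphi(x')$ with $|\nabla\varphi(x')|\le L_0$ on a disc of radius comparable to $r_0$. Here $L_0$ depends only on $M_0$ and $\alpha$, and can be taken $\le M_0$. Let $\nu$ be the outer normal at $Q$. From the bound on $\nabla\varphi$ we get two consequences. First, the segment $Q+t\nu$, $0<t\le\overline d$, stays outside $\overline D_1$. Second, the truncated cone $C\big(Q,\nu,\tfrac{\overline d^2-R^2}{\overline d},\arcsin\tfrac R{\overline d}\big)$ together with the ball $B_R(Q+\overline d\nu)$ lies outside $\overline D_1$, provided $R=\overline d/\sqrt{1+L_0^2}$ and $\overline d\le c\,r_0$. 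Here $c$ is small and depends only on $M_0$ and $\alpha$, and $\arcsin(R/\overline d)$ is the angle of the cone tangent to the ball. We also shrink $\overline d$ so that this region stays a distance $\gtrsim \overline d$ from $\p\oms\cap\p\Omega$ wherever needed. This is possible by the properties of $\oms$ and the assumption $D_i\subset\Omega$, which gives a positive distance to $\p\Omega$ depending only on the a priori data.

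\emph{Step 2 (choice of $P$; the main obstacle).} The cone at $x_0$ may meet $D_2$, or part of $\Omega_D$, whenever $d_\mu$ is much smaller than $\overline d$. I would split into two cases.

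If $d_\mu\ge \overline d$ (up to a constant), take $P=x_0$. Then the cone and the ball $B_R(P+\overline d\nu)$ lie in $B_{\overline d}(x_0)\setminus\overline D_1$. This set avoids $D_2$ and is connected to $x_0\in\p\G$, so it lies in $\G=\oms\setminus\overline{\Omega_D}$.

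If $d_\mu$ is small, I would first try to reduce to the first case by a scaling argument, replacing $\overline d$ by a multiple of $d_\mu$. But $\overline d$ must depend only on the a priori data, so instead one should argue as in \cite{AleDiCMorRos14} and \cite{Al-DC05}. The idea is as follows. Consider the normal segments of length $\overline d$ issuing from points of $\p D_1\cap\p\Omega_D$ near $x_0$. If every such segment hits $D_2$, then the $C^{1,\alpha}$ graph of $\p D_2$ is squeezed against that of $\p D_1$ within distance $\le C d_\mu$ over a disc of radius $\sim r_0$. This contradicts $x_0\in\p\G$ unless $\p D_2$ actually leaves. The point where it leaves gives a $P\in\p D_1$ whose normal cone is free, and $\mathrm{dist}(P,D_2)\ge c_1 d_\mu$ follows from the Lipschitz bound on the two graphs. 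Making this compactness/graph-comparison argument quantitative, with $c_1$ depending only on the a priori data, is the step I expect to be hardest.

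\emph{Step 3 (the tube).} Since $\G$ is connected, contains $P+\overline d\nu$, and contains $\Omega_0\ni P_0$, we join $P_0$ to $P_\Sigma$ by the segment $S$, whose $2r_\sharp$-neighbourhood lies in $\oms$ by \eqref{eq:tubo-P0-Psigma}. Then we continue inside $\G$ to $P+\overline d\nu$. To keep a uniform tube of radius $R$ around the path, I would use that $\G$ is Lipschitz with constants depending only on the a priori data, as in \cite{Al-DC05}. This gives a uniform interior cone condition, so the set $\{x\in\G:\mathrm{dist}(x,\p\G)>R\}$ is connected for $R$ small in terms of $r_0$. Any path in that set yields $V(\gamma)\subset\oms\setminus\Omega_D$. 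Combined with Step 2, this proves \eqref{eq:vermeV}.
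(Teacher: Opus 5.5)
\paragraph{Overall.} The paper gives no proof of this lemma. It is imported from \cite{AleDiCMorRos14}, which builds on the geometric analysis of \cite{Al-DC05}, so your sketch has to stand on its own. It has a genuine gap exactly where the content of the lemma lies: the existence of the tube $\bigcup_{S\in\gamma}B_R(S)$ and the corresponding choice of $P$.

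\paragraph{Step 3 and the choice $P=x_0$.} In Step 3 you assert that $\mathcal{G}$ is Lipschitz with constants depending only on the a priori data, and that $\{x\in\mathcal{G}:\mathrm{dist}(x,\partial\mathcal{G})>R\}$ is connected for $R$ small. Both claims are false. $\partial\mathcal{G}$ is made of pieces of $\partial D_1$ and $\partial D_2$, and no a priori assumption controls their mutual position. They may cross at arbitrarily small angles or touch tangentially, which produces cusps. They may also come within any distance $\delta>0$ of each other without touching. For instance, let $D_2$ be a thick cup and $D_1$ a lid lying at distance $\delta\ll R$ above its rim. The cavity belongs to $\mathcal{G}$, but it communicates with the rest of $\mathcal{G}$ only through a gap of width about $\delta$, so the $R$-interior of $\mathcal{G}$ is disconnected.

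Your choice $P=x_0$ in Step 2 uses only the maximality of $\mathrm{dist}(\cdot,D_2)$, so nothing stops $x_0$ from lying on the wall of such a pocket. This does happen for admissible pairs. Let $D_1$ push a bulb through a hole of a shell-shaped $D_2$ that it almost fills. Let $D_1$ also wrap $D_2$ from outside at distance $\delta$, leaving an opening. Then $d_\mu$ is attained only inside the cavity, and no path $\gamma$ with $V(\gamma)\subset\Omega^{\sharp}\setminus\Omega_D$ reaches $x_0+\bar d\nu$.

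So $P$ must be selected among points of $\partial D_1$ that can be reached from $P_0$ by tubes of radius $R$. The missing step is to prove that such a point with $\mathrm{dist}(P,D_2)\ge c_1 d_\mu$ exists. This is why the lemma only claims $c_1 d_\mu\le \mathrm{dist}(P,D_2)$, with $c_1$ possibly much smaller than $1$, and not $P=x_0$.

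\paragraph{Step 2 when $d_\mu$ is small.} This case is not established either, and the contradiction you sketch is not valid. A thin layer of $\mathcal{G}$ squeezed between $\partial D_1$ and $\partial D_2$ is perfectly compatible with $x_0\in\partial\mathcal{G}$. The point where $\partial D_2$ ``leaves'' is neither located nor shown to satisfy the required bound.

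What has to be proved is that, for $d_\mu\ll r_0$, $D_2$ lies on the \emph{inner} side of $\partial D_1$ near the chosen point. Then the cone of half-aperture $\arctan(1/L_0)$ misses $D_2$ up to height $\bar d$, even though $\mathrm{dist}(P,D_2)$ may be much smaller than $\bar d$. The natural tool is Lemma \ref{lemma-dist-modif}, which your proposal never uses. The bound $\mathrm{d}_{\mathcal{H}}(\partial D_1,\partial D_2)\le c_0 d_\mu$ places $\partial D_2$ in the $c_0d_\mu$-neighbourhood of $\partial D_1$. A connectedness argument on the exterior of that neighbourhood then yields both the correct orientation of $D_2$ near $x_0$ and tubes of radius $R\gg d_\mu$ around $D_1$ that avoid $\Omega_D$.

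\paragraph{Distance from $\partial\Omega$.} The inclusion $D_i\subset\Omega$ gives no quantitative lower bound on $\mathrm{dist}(D_i,\partial\Omega)$. The separation from $\partial\Omega$ you use in Step 1, and which the connectedness argument above also needs, must therefore be an additional hypothesis. It cannot be derived from the a priori data listed in the paper.
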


% \subsection{Alessandrini's identity}
% {\color{blue} da \cite{Beretta2017} p. 9}

% Let us recall an important identity. Let $u_k$ for $k=1,2$ be the weak solution to the following boundary value problem:
% \[
% \begin{cases}
%     \Div(\C_{D_k} \nh u_k) + \rho_{D_k}\omega^{2} u_k = 0 &\text{in }\Omega,\\
%     u_k|_{\p\Omega} \in H^{1/2}(\p\Omega),
% \end{cases}
% \]
% where
% \begin{align*}
%     \C_{D_k}(x) &= \C + (\C^{I} - \C)\chi_{D_k}(x),\\
%     \rho_{D_k}(x) &= \rho + (\ri - \rho)\chi_{D_k}(x).
% \end{align*}
% Then the following version of Alessandrini's identity holds:
% \begin{equation}
% \begin{split}
%     \la (\Lambda_{D_1} - \Lambda_{D_2})u_1|_{\p\Omega}, u_2|_{\p\Omega} \ra = \int_{\Omega} [(\C_{D_1} - \C_{D_2}) \nh u_1 \cdot \nh u_2 - (\rho_{D_1} - \rho_{D_2})\omega^{2} u_1\cdot u_2] \diff x.
% \end{split}
% \end{equation}

\subsection{Green's functions and singular solutions}

\begin{proposition}(\cite[Proposition 5.1]{AleDiCMorRos14}.)\label{eqn: gamma estimates}
Under the assumptions of Theorem \ref{thm: stability2}, for any $y\in \mathbb{R}^3$ there exists a unique fundamental matrix $\Gamma^D(\cdot,y)\in C^0(\mathbb{R}^3\setminus\{y\})$ for the operator $\mbox{div}(\mathbb{C}_D\nabla_x (\cdot))$  in  \eqref{elastictensorwithinclusion} with $\Omega=\mathbb{R}^3$. Moreover, we have 
\begin{equation}\label{simmetria}
    \Gamma^D(x,y)=(\Gamma^D(y,x))^T \ , \ \mbox{for every} \ x\in\mathbb{R}^3, x\neq y \ ,
\end{equation}

\begin{equation}\label{stimaGamma}
    |\Gamma^D(x,y)|\le C |x-y|^{-1} , \ \mbox{for every} \ x\in\mathbb{R}^3, x\neq y\ ,
\end{equation}
\begin{equation}\label{stimagradienteGamma}
    |\nh_x\Gamma^D(x,y)|\le C |x-y|^{-2} , \ \mbox{for every} \ x\in\mathbb{R}^3, x\neq y \ ,
\end{equation}
where $C>0$ depends on the a priori data only.

%\begin{proof} 
%See \cite[Proposition 5.1]{AleDiCMorRos14}.
%\end{proof}

\end{proposition}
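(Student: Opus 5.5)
The plan is to construct $\Gamma^D$ through a fixed-point argument built on the fundamental solution of the homogeneous Lamé operator, and then obtain the symmetry and the pointwise bounds from the weak formulation combined with local regularity for transmission problems across the $C^{1,\alpha}$ interface $\partial D$. The frequency plays no role here, since the operator is the static one $\mbox{div}(\mathbb{C}_D\nabla_x(\cdot))$. Throughout, I rely on the a priori data: strong convexity \eqref{eqn: LameCond}, the upper bounds \eqref{eqn: LameUpperBound}, and the $C^{1,\alpha}$ regularity of $D$ with constants $r_0,M_0$.

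\textbf{Step 1 (existence and uniqueness).} Let $\Gamma$ be the Kelvin matrix for $\mathbb{C}$, which satisfies $|\Gamma(x,y)|\le C|x-y|^{-1}$ and $|\nabla\Gamma|\le C|x-y|^{-2}$. Write $\Gamma^D(\cdot,y)=\Gamma(\cdot,y)+w_y$, where $w_y$ solves
\[
\mbox{div}(\mathbb{C}_D\widehat\nabla w_y)=-\mbox{div}\big((\mathbb{C}^I-\mathbb{C})\chi_D\widehat\nabla\Gamma(\cdot,y)\big).
\]
If $y\notin\overline D$, the right-hand side lies in $L^2$ near $D$, so Lax--Milgram in the homogeneous space $\{w:\nabla w\in L^2(\mathbb{R}^3),\,w\in L^6\}$ together with Korn's inequality on $\mathbb{R}^3$ gives a unique $w_y$. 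For $y\in\overline D$ I would instead start from the Kelvin matrix for $\mathbb{C}^I$, or build $\Gamma^D$ as the limit of mollified fundamental solutions (Hofmann--Kim / Grüter--Widman style), which relies only on $L^\infty$ ellipticity plus Hölder continuity of solutions. Uniqueness follows from a Liouville argument: a difference that decays at infinity and lies in the energy space solves the homogeneous equation, so it vanishes by Korn and coercivity.

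\textbf{Step 2 (symmetry \eqref{simmetria}).} I would apply the weak formulation with $\Gamma^D(\cdot,x)e_j$ as the test function for the equation satisfied by $\Gamma^D(\cdot,y)e_i$, and vice versa. The symmetry of $\mathbb{C}_D$ gives the standard identity, after first cutting off near the poles and then passing to the limit.

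\textbf{Step 3 (pointwise bounds, the main obstacle).} For \eqref{stimaGamma}, the generic De Giorgi--Nash--Moser route is not available for systems. Instead I would use that solutions of the transmission problem with piecewise constant coefficients and $C^{1,\alpha}$ interface are locally $C^{0,\alpha}$, indeed piecewise $C^{1,\alpha}$ up to $\partial D$ (Li--Nirenberg). By scaling this yields the uniform gradient estimate
\[
|\nabla u(x)|\le C\rho^{-1}\Big(\rho^{-3}\int_{B_\rho(x)}|u|^2\Big)^{1/2}
\]
for $\rho\le r_0$, and since the coefficients are piecewise constant, a blow-up/flattening argument extends this to every $\rho$. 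Once such a scale-invariant Hölder (gradient) estimate holds, the Hofmann--Kim construction delivers $|\Gamma^D(x,y)|\le C|x-y|^{-1}$. Applying the gradient estimate on $B_{|x-y|/2}(x)$ then gives \eqref{stimagradienteGamma}. The hardest part is ensuring the constants are uniform at all scales, including scales larger than $r_0$ where $D$ looks small. I would handle this by observing that far from $D$ the equation is constant-coefficient, while near $D$ at scale $\rho\ge r_0$ an energy comparison with the Kelvin solution suffices. This is exactly the content of \cite[Proposition 5.1]{AleDiCMorRos14}, which I would ultimately cite.
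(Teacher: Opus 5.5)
Your proposal is correct and matches the paper. The paper gives no argument of its own and simply quotes \cite[Proposition 5.1]{AleDiCMorRos14}, the same result you end up citing, and your outline is the standard route to it: the Hofmann--Kim construction under scale-invariant regularity supplied by Li--Nirenberg, symmetry via the Green identity with cut-offs, and the gradient bound from the estimate on $B_{|x-y|/2}(x)$. The one loose point, if you were to write it out, is uniformity at scales $\rho\gtrsim r_0$. There a bare energy comparison with a constant-coefficient solution does not by itself make the contribution of $D$ small, so you would need something like Meyers-type higher integrability of $\nabla u$ to exploit $|D|/|B_\rho|\ll 1$.
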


Let us denote by $\Gamma$ (Kelvin fundamental solution) and $\Gamma^{+} $ (Rongved fundamental solution) the fundamental matrix $\Gamma^D$ when $D=\emptyset$ and $D=\mathbb{R}^3_+$  respectively .  

%Let $\Gamma$ denote the Kelvin fundamental solution in $\R^{3}$ of the Lamè operator $\Div(\C \nh \cdot)$ with constant coefficients $\lambda$ and $\mu$.
Precisely,  the explicit expression of $\Gamma$ is given by 
\begin{equation}\label{eqn: Kelvin}
	\Gamma(x,y) = \frac{1}{16\pi\mu(1-\nu)}\cdot \frac{1}{|x-y|}\Big(\frac{(x-y)\otimes(x-y)}{|x-y|^{2}} + (3-4\nu)Id \Big)
\end{equation}
where $\nu$ is the Poisson coefficient,  $\nu= \frac{\lambda}{2(\lambda +\mu)}$ .
We refer to \cite[Sect. 10]{AleDiCMorRos14} for the explicit expression of $\Gamma^+$.

\begin{proposition}\label{GreenD1}
    Under the assumptions of Theorem \ref{thm: stability2}, for any $y\in \oms$, there exists a unique Green's function $G_i(\cdot, y)$ continuous in $\oms\setminus \{y\}$ such that, for every $l\in \R^{3}$ with $|l|=1$,

    \begin{equation}\label{eqn:Greeneq}
    	\begin{cases}
    	  \text{div}_x(\C_{D_i}\nh_x G_i(\cdot,y)l) + \rho_{D_i}\omega^{2} G_i(\cdot,y)l = -l\delta(\cdot,y) & \text{in }\oms,\\
    	 G_i(\cdot,y)=0 \ &\text{on }\partial \oms \ , 
    	\end{cases}
    \end{equation}

    with $i=1,2$.
    Furthermore, if $\text{dist}(y,\partial\oms)\geq \gamma r_0$, for some $\gamma>0$ then
    \begin{align}
        \|G_i(\cdot,y) - \Gamma^{D_i}(\cdot,y)\|_{H^{1}(\oms)} &\leq C{r_0}^{-1},\label{eqn: boundDiff}\\
        \|G_i(\cdot,y)\|_{H^{1}(\oms\setminus B_r(y))} &\leq C{r_0^{-1/2}} \ {r^{-1/2}}, \ \ 0<r<r_0\label{eqn: boundGh1}\\
        \|G_i(\cdot,y)\|_{L^{2}(\oms)} &\leq C{r_0}^{-1} \label{eqn: boundGL2}
    \end{align}
    where $C>0$ only depends on the a priori data and on $\gamma$.
\end{proposition}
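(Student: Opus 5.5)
We will construct $G_i(\cdot,y)$ as a perturbation of the static fundamental matrix of Proposition~\ref{eqn: gamma estimates}. Fix $l\in\R^3$ with $|l|=1$ and look for $G_i(\cdot,y)l=\Gamma^{D_i}(\cdot,y)l+w$. Since $\Gamma^{D_i}(\cdot,y)l$ solves $\dive(\C_{D_i}\nh(\cdot))=-l\delta(\cdot,y)$, the correction $w$ must solve
\begin{equation*}
\begin{cases}
\dive(\C_{D_i}\nh w)+\rho_{D_i}\omega^2 w=-\rho_{D_i}\omega^2\,\Gamma^{D_i}(\cdot,y)l & \text{in }\oms,\\
w=-\Gamma^{D_i}(\cdot,y)l & \text{on }\p\oms .
\end{cases}
\end{equation*}
By \eqref{stimaGamma}, $|\Gamma^{D_i}(x,y)|\le C|x-y|^{-1}$, which is in $L^2_{loc}(\R^3)$ because we are in three dimensions. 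So the right-hand side lies in $L^2(\oms)$. Using \eqref{diametro} and \eqref{eqn: omega_bounds}, and with the normalized norms of Remark~\ref{rem:2.1}, we expect $\|\omega^2\Gamma^{D_i}l\|_{L^2}\le C r_0^{-3}$ (unnormalized scale). If $\text{dist}(y,\p\oms)\ge\gamma r_0$, then on a neighbourhood of $\p\oms$ we have $|\Gamma^{D_i}|\le C(\gamma r_0)^{-1}$ and, by \eqref{stimagradienteGamma}, $|\nabla\Gamma^{D_i}|\le C(\gamma r_0)^{-2}$. Strictly, \eqref{stimagradienteGamma} bounds only the symmetric gradient $\nh_x\Gamma^{D_i}$. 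Near $\p\oms$ we are in the region where $\C_{D_i}=\C$ is constant, so interior regularity for the Lamé system gives the full gradient bound there. Hence the Dirichlet datum has $H^{1/2}(\p\oms)$ norm at most $C r_0^{-1}$. Lifting it by a cutoff extension reduces the problem to homogeneous Dirichlet data with an $H^{-1}$ source.

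The core step is solvability of the Dirichlet problem for $\dive(\C_{D_i}\nh\cdot)+\rho_{D_i}\omega^2$ in $\oms$, with a quantitative bound. By Korn's inequality and strong convexity \eqref{eqn: LameCond}, the operator is coercive plus a compact perturbation. The Fredholm alternative then gives existence and uniqueness as soon as $\omega^2$ is not a Dirichlet eigenvalue of the weighted problem in $\oms$. Uniqueness of $G_i$ follows from the same injectivity, since the difference of two Green's functions solves the homogeneous problem.

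The main obstacle is that no spectral hypothesis on $\omega$ is assumed. Without one we need a bound $\text{dist}(\omega^2,\text{spec})\ge c\,r_0^{-2}$ with $c$ depending only on the a priori data. The first idea I would try uses the freedom in choosing $\Omega_0$: enlarge it within a one-parameter family $\Omega_0^t$, for instance dilations of the exterior bump around $P_0$. The Dirichlet eigenvalues move monotonically and Lipschitz-continuously in $t$. The number of eigenvalues below $2\delta_0^2 r_0^{-2}\gamma_0^{-1}$ is bounded by a Weyl-type count, depending only on the a priori data, because of \eqref{diametro}. By pigeonhole one can then select $t$ so that $\omega^2$ has distance at least $c\,r_0^{-2}$ from the spectrum, and properties (1)--(4) of $\oms$ still hold with slightly modified constants. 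With that gap, the spectral resolution in the $\rho_{D_i}$-weighted $L^2$ space gives $\|w\|_{H^1(\oms)}\le C r_0^{-1}$, after tracking the dimensional normalizations. This is exactly \eqref{eqn: boundDiff}.

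The remaining bounds are direct integrations of the pointwise estimates for $\Gamma^{D_i}$.
\begin{itemize}
\item On $\oms\setminus B_r(y)$, \eqref{stimagradienteGamma} gives $\int|x-y|^{-4}\le C r^{-1}$ for the symmetric gradient. Korn's inequality away from the singularity, or interior estimates, handles the full gradient. With the normalization $r_0^{-3/2}(\cdots+r_0^2\cdot r^{-1})^{1/2}$ this yields $\|\Gamma^{D_i}\|_{H^1(\oms\setminus B_r(y))}\le C r_0^{-1/2}r^{-1/2}$.
\item Adding $\|w\|_{H^1}\le Cr_0^{-1}\le Cr_0^{-1/2}r^{-1/2}$, valid for $r<r_0$, gives \eqref{eqn: boundGh1}.
\item Finally, $\int_{\oms}|x-y|^{-2}\le C r_0$, so $\|\Gamma^{D_i}\|_{L^2}\le r_0^{-3/2}(Cr_0)^{1/2}=Cr_0^{-1}$. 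Together with the bound on $w$ this gives \eqref{eqn: boundGL2}.
\end{itemize}
Continuity of $G_i$ away from $y$ follows from continuity of $\Gamma^{D_i}$ and from the regularity of $w$, using transmission-problem estimates across the $C^{1,\alpha}$ interface $\p D_i$.
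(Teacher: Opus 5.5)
Your decomposition $G_i(\cdot,y)l=\Gamma^{D_i}(\cdot,y)l+w$, the boundary value problem for $w$, and the integrations of the pointwise bounds on $\Gamma^{D_i}$ are exactly the paper's argument. The paper, however, simply calls $w$ ``the unique solution'' and writes $\|w\|_{H^1(\oms)}\le C\bigl(r_0^{-2}\|\rho_{D_i}\omega^2\Gamma^{D_i}(\cdot,y)l\|_{H^{-1}(\oms)}+\|\Gamma^{D_i}(\cdot,y)l\|_{H^{1/2}(\p\oms)}\bigr)$, using only $\omega\le\delta_0/r_0$. You are right that a non-resonance property of $\oms$ is hidden there. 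If $\omega^2$ is a Dirichlet eigenvalue of $-\rho_{D_i}^{-1}\dive(\C_{D_i}\nh\,\cdot)$ in $\oms$ with eigenfunction $\phi$, then testing the equation for $G_i(\cdot,y)l$ against $\phi$ shows it is solvable only if $\phi(y)\cdot l=0$. Near resonance, no constant depending only on the a priori data is available. So $\oms$ must be chosen with care, and the paper does not do this.

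Your remedy, though, has a gap at the pigeonhole step. You have monotonicity and Lipschitz continuity of the branches $t\mapsto\lambda_k(t)$, plus a bound $N$ on how many lie in the relevant window. These do not produce a $t$ with $\text{dist}(\omega^2,\text{spec})\ge c\,r_0^{-2}$. Lipschitz continuity bounds the speed of the eigenvalues from above, but you need a bound from below. Nothing you have stated prevents a nonincreasing branch from staying within $c\,r_0^{-2}$ of $\omega^2$ for every $t$ in your family.

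What is needed is $-\frac{d}{dt}\lambda_k(t)\ge\kappa\,r_0^{-2}$ (one-sided derivatives at multiple eigenvalues), uniformly over the branches in the window, with $\kappa$ depending only on the a priori data. Then the set of bad parameters has measure at most $2Nc/\kappa$, and a good $t$ exists for $c$ small. Proving this requires two ingredients:
\begin{itemize}
\item Hadamard's shape-derivative formula on the moving part of $\p\Omega_0^t$, which must therefore be smooth. Since $\phi=0$ there, the integrand is comparable to $|\C\nh\phi\,n|^2$.
\item A quantitative lower bound for the traction of $L^2$-normalized Dirichlet eigenfunctions on that part. This is a quantitative unique continuation estimate from Cauchy data with zero Dirichlet datum, uniform for frequencies in the window.
\end{itemize}
This is the missing ingredient. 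Also, the same $t$ must avoid the spectra for $i=1$ and $i=2$ simultaneously, since $G_1$ and $G_2$ must live on the same $\oms$; this only doubles $N$.

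A minor point: you cannot claim $\C_{D_i}=\C$ near $\p\oms$, because no lower bound on $\text{dist}(D_i,\p\Omega)$ is among the a priori data. The second Korn inequality on $\oms\setminus B_{\gamma r_0/2}(y)$, which you use later anyway, already gives the full-gradient control you need for the trace estimate.
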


\begin{proof}
    Set 
    \[
    G_i(x,y)l = \Gamma^{D_i}(x,y)l + w(x,y;l),
    \]
    where $w(\cdot,y;l)\in H^{1}(\oms)$ is the unique solution to

     \begin{equation}\label{eqn:pbw}
    	\begin{cases}
    		   \Div(\C_{D_i}\nh_x w(\cdot,y;l)) +{\rho}_{D_i} \omega^{2} w(\cdot,y;l) = -{\rho}_{D_i} \omega^{2} \Gamma^{D_i}(\cdot,y)l \ &\text{in }\oms,\\
    		w(\cdot,y;l) = -\Gamma^{D_i}(\cdot,y)l \ \ &\text{on }\p\oms,
    	\end{cases}
    \end{equation}

    Hence, $G_{i}$ satisfies \eqref{eqn:Greeneq}.
    
    Furthermore, if $\text{dist}(y,\partial\oms)\geq \gamma r_0$, by \eqref{eqn: omega_bounds},
   \begin{multline*}
    \|w(\cdot,y;l)\|_{H^{1}(\oms)} \leq C(r_0^{-2}\|{\rho}_{D_i} \omega^{2} \Gamma^{D_i}(\cdot,y)\|_{H^{-1}(\oms)} + \|\Gamma^{D_i}(\cdot,y)\|_{H^{1/2}(\p\oms)})\leq\\
    \leq
    C(\|\Gamma^{D_i}(\cdot,y)\|_{H^{-1}(\oms)} + \|\Gamma^{D_i}(\cdot,y)\|_{H^{1/2}(\p\oms)}).
\end{multline*}

    Let us estimate $\|\Gamma^{D_1}(\cdot,y)\|_{H^{1/2}(\p\oms)}$ from above. 
    By the trace theorem, 
    \[
    \|\Gamma^{D_i}(\cdot,y)\|_{H^{1/2}(\p\oms)}\leq C \|\Gamma^{D_i}(\cdot,y)\|_{H^{1}(\oms\setminus B_{\gamma r_0/2}(y))}.
    \]
    By using \eqref{diametro}, it follows that 
    \[
    \begin{split}
    \|\Gamma^{D_i}(\cdot,y)\|_{H^{1}(\oms\setminus B_{\gamma r_0}(y))}^{2} &=r_0^{-3}\left( \|\Gamma^{D_i}(\cdot,y)\|_{L^{2}(\oms\setminus B_{\gamma r_0}(y)))}^{2} + r_0^2\|\nh_x\Gamma^{D_i}(\cdot,y)\|_{L^{2}(\oms\setminus B_{\gamma r_0}(y)))}^{2}\right)\\
    &\leq C r_0^{-3}\left(\int_{\oms\setminus B_{\gamma r_0}(y)} \frac{1}{|x-y|^{2}} dx + r_0^2 \int_{\oms\setminus B_{\gamma r_0}(y)} \frac{1}{|x-y|^{4}} dx \right)\\ 
    &\leq C r_0^{-3}\left(\int_{\gamma r_0}^{M_1 r_0} \frac{1}{\rho^{2}}\rho^{2} d\rho +r_0^2 \int_{\gamma r_0}^{M_1r_0} \frac{1}{\rho^{4}}\rho^{2} d\rho\right)\leq Cr_0^{-2}
    \end{split}
    \]
    Similarly, we have
    
    \[
    \|\Gamma^{D_i}(\cdot,y)\|_{H^{-1}(\oms)} \leq C \|\Gamma^{D_i}(\cdot,y)\|_{L^{2}(\oms)}\leq Cr_0^{-1},
    \]
    so that
      \[
    \|w(\cdot,y;l)\|_{H^{1}(\oms)} \leq Cr_0^{-1}.
    \]
    
    Moreover, we have
    \[
    \begin{split}
    \|G_i(\cdot,y)\|_{H^{1}(\oms\setminus B_r(y))} &\leq \|w(\cdot,y;l)\|_{H^{1}(\oms)} + \|\Gamma^{D_i}(\cdot,y)\|_{H^{1}(\oms\setminus B_r(y))}\\
    &\leq C r_0^{-1}+C r_0^{-1/2}r^{-1/2}\leq C r_0^{-1/2}r^{-1/2}
    \end{split}
    \]
    and, by arguing as above, we obtain \eqref{eqn: boundGL2}.
\end{proof}

%{\color{blue} da \cite{AleDiCMorRos14} p. 2702}

% Given $y\in \R^{3}$ and a concentrated force $l\delta(\cdot-y)$ applied at $y$, with $l\in\R^{3}$ and $|l|=1$, we consider the normalized fundamental solution $u^{D}\in L^{1}_{loc}(\R^{3},\R^{3})$ of the boundary value problem
% \begin{equation}
%     \begin{cases}
%         \text{div}_x(\C_D\nh_x u^{D}(x,y;l)) = -l\delta(x-y) &\text{in }\R^{3}\setminus \{y\}\\
%         \lim\limits_{|x|\rightarrow \infty} u^{D}(x,y;l) = 0
%     \end{cases}
% \end{equation}
% that can be written as
% \begin{equation}
%     \int_{\oms} [\C_D u^{D}(x,y;l)\cdot\nh \varphi - \rho_D\omega^{2} u^{D}(x,y;l)\cdot \varphi] \diff x = l\varphi(y), \qquad \forall \varphi\in C^{\infty}_c(\R^{3},\R^{3}).
% \end{equation}
% It is well-known that
% \[
% u^{D}(x,y;l) = \nh G(\cdot,y)l.
% \]

For any $y,w \in \oms \setminus \Omega_{D}$ and  $l,m\in \R^{3}$  two unitary vectors, i.e., $|l| = |m| =1$, we introduce the singular solution $S$ given by
\begin{equation}\label{eqn: f}
    S(y,w;l,m) = S_{D_1}(y,w;l,m) - S_{D_2}(y,w;l,m),
\end{equation}
where
\begin{equation}
    \begin{split}
        S_{D_1}(y,w;l,m) &= \int_{D_1} (\C^{I} - \C)(x)\nh_x G_1(x,y)l\cdot \nh_x G_2(x,w)m \diff x,\\
        &- \omega^{2} \int_{D_1} (\ri - \rho)(x) G_1(x,y)l\cdot G_2(x,w)m \diff x,\\
        S_{D_2}(y,w;l,m) &= \int_{D_2} (\C^{I} - \C)(x)\nh_x G_1(x,y)l\cdot \nh_x G_2(x,w)m \diff x,\\
        &- \omega^{2} \int_{D_2} (\ri - \rho)(x) G_1(x,y)l\cdot G_2(x,w)m \diff x.
    \end{split}
\end{equation}

Let us fix $y=\overline{y}\in \Omega^{\sharp}\setminus \Omega_{D}$ and $l\in \mathbb{R}^3, |l|=1$. We define 
\begin{eqnarray}\label{vettoref}
	&&\widehat{S}_k(\overline{y}, w;l)=S(\overline{y},w; l ,e_k), \ \ \ k=1,2,3, \nonumber\\
	&& \widehat{S}=(\widehat{S}_1,\widehat{S}_2,\widehat{S}_3).
\end{eqnarray}
Similarly, let us fix $w=\overline{w}$ in $\Omega^{\sharp}\setminus \Omega_{D}$ and $m\in \mathbb{R}^3, |m|=1$. We define 
\begin{eqnarray*}%\label{vettoreftilde}
	&&\widetilde{S}_j(y,\overline{w};m)=S(y,\bar{w};e_j,m), \ \ \ j=1,2,3,\nonumber\\
	&& \widetilde{S}=(\widetilde{S}_1,\widetilde{S}_2,\widetilde{S}_3).
\end{eqnarray*}

%Arguing as in Proposition $4.4$ in \cite{Be-Fr-V} we find the following. 
Given $y=\overline{y}\in \Omega^{\sharp}\setminus \Omega_{D}$ and $l\in \mathbb{R}^3, |l|=1$ and arguing as in Proposition $4.4$ in \cite{Be-Fr-V},
the vector-valued function $\widehat{S}=\widehat{S}(\overline{y},w;l)$ satisfies the Lam\'e system 
\begin{eqnarray}\label{divf}
	\mbox{div}_w (\C\nabla_{w}\widehat{S}) + \rho\omega^2\widehat{S}= 0, \ \ \ \mbox{for every }w \in \Omega^{\sharp}\setminus \Omega_{D}.
\end{eqnarray}

Analogously, choosing $w=\overline{w}\in \Omega^{\sharp}\setminus \Omega_{D}$ and $m\in \mathbb{R}^3, |m|=1$,
the vector-valued function $\widetilde{S}=\widetilde{S}(y,\overline{w};m)$ satisfies the Lam\'e system 
\begin{eqnarray}\label{divftilde}
	\mbox{div}_y (\C\nabla_{y}\widetilde{S})+ \rho\omega^2\widetilde{S}= 0, \ \ \  \mbox{for every }y \in \Omega^{\sharp}\setminus \Omega_{D}.
\end{eqnarray}

By the linearity of $S$ with respect to $l$ and $m$, we have trivially that,
for every $ l,m \in \RR^3, \ |l|=|m|=1$, 
\begin{equation}
	\label{eq:doppia disug per S}
	\begin{aligned}{}
		&
		| S (y,\overline{w};l,m)|
		\leq 
		|\widetilde{S}(y, \overline{w};m)|\leq \sqrt{3} 
		\max_
		{\overset{\scriptstyle l \in \RR^3}{\scriptstyle
				|l|=1}} | S (y,\overline{w};l,m)|,
		\ \ \hbox{for every } y, \, \overline{w} \in \oms \setminus \Omega_{D},
		\\
		&
		| S (\overline{y},w;l,m)|
		\leq 
		|\widehat{S}(\overline{y},w;l)|\leq \sqrt{3} 
		\max_
		{\overset{\scriptstyle m \in \RR^3}{\scriptstyle
				|m|=1}} | {S} (\overline{y},w;l,m)|, 
		\ \ \hbox{for every } \overline{y},w \in \oms \setminus \Omega_{D}.
	\end{aligned}
\end{equation}

\subsection{Upper bound for the singular solutions}

In order to prove the main result, we need to determine how the singular solutions behave near the singularities, namely the boundary of the unknown inclusion. In this Subsection we will show that the singular solution $S$ can be bounded in terms of the power error in the measurements, $\|\Lambda_{D_1}-\Lambda_{D_2}\|_*$. %The tools that we will use are a fundamental regularity result by Li and Nirenberg 

\begin{lemma}( \cite[Theorem 1.1]{LiNUW})\label{lemma:3spheres}
	Let $u \in H^1(B_s(Q))$ be a solution to 
	\begin{equation}
		\label{eq:LBD60.1}
		\begin{aligned}{}
			&
			\dive ( \C \nabla u  ) + \omega^{2} \rho u =0 \quad \hbox{in } B_s(Q),
		\end{aligned}
	\end{equation}
	where the elastic tensor $\C$ satisfies the conditions (iv) in Subsection \ref{Subsec: a-priori}. For every $s_1$, $s_2$, $s_3$, $0<s_1 <s_2<s_3 \leq s$, 
	\begin{equation}
		\label{eq:LBD60.2}
		\begin{aligned}{}
			&
			\|u\|_{L^\infty (B_{s_2}(Q))  } \leq C \|u\|_{L^\infty (B_{s_1}(Q))  }^\tau 
			\|u\|_{L^\infty (B_{s_3}(Q))  }^{1-\tau},
		\end{aligned}
	\end{equation}
	where $C>0$ and $\tau$, $0<\tau<1$, only depend on $\delta_0$, $\gamma_0$, $\lambda$, $\mu$, $ \dfrac{s_2}{s_3}$, $ \dfrac{s_1}{s_3}$.
\end{lemma}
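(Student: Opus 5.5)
The plan is to reduce the Lamé system with the zero-order term to a weakly coupled system of scalar Helmholtz-type equations. For such a system a Carleman estimate with a radial weight singular at $Q$ gives a three spheres inequality. This is the strategy of Lin, Nakamura, Uhlmann and Wang. By translation and scaling $x\mapsto Q+s_3x$ we may assume $Q=0$ and $s_3=1$. The equation keeps its form with $\omega$ replaced by $s_3\omega$, and $s_3\omega$ is bounded by a constant times $\delta_0$, since $s\le C r_0$ in all applications. This is where the dependence on $\delta_0$ enters.

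Since $\C$ is isotropic and homogeneous, the equation reads $\mu\Delta u+(\lambda+\mu)\nabla\dive u+\omega^2\rho u=0$. Set $p=\dive u$ and take the divergence. This gives $(\lambda+2\mu)\Delta p+\omega^2\rho p=0$ and $\mu\Delta u+\omega^2\rho u=-(\lambda+\mu)\nabla p$. The pair $(u,p)$ satisfies a system whose principal part is diagonal, namely Laplacians with constant positive coefficients $\mu$ and $\lambda+2\mu$, bounded below by $\alpha_0$ and $\beta_0$ as in \eqref{eqn: LameCond}. The coupling appears only through the first-order term $\nabla p$ in the $u$-equation, and the zero-order terms are bounded by $C\delta_0^2$. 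The system is triangular, because $p$ solves a decoupled scalar equation. Even so, the gradient coupling has to be absorbed, so the Carleman estimate must control $\nabla p$ with a weight one power of $\tau$ better.

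I would use the weight $\varphi(x)=|x|^{-\beta}$, or the $\log|x|$-type weight of the cited work. With it, for $U\in C_c^\infty(B_1\setminus\{0\})$ and $\tau$ large, the scalar estimate is
\[
\tau^3\|e^{\tau\varphi}\varphi^{\cdot}U\|^2+\tau\|e^{\tau\varphi}\varphi^{\cdot}\nabla U\|^2\le C\|e^{\tau\varphi}\Delta U\|^2 ,
\]
with suitable powers of $|x|$ inside the norms. I would apply this to $\chi u$ and to $\chi\nabla p$ (or $\chi p$ with one extra derivative), where $\chi$ is a cutoff equal to $1$ on $B_{s_2'}\setminus B_{s_1'}$. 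The term $\nabla p$ on the right of the $u$-equation is absorbed by the left side of the $p$-estimate, after multiplying the $p$-estimate by a large power of $\tau$. Working with $\nabla p$ rather than $p$ avoids losing a derivative. The zero-order terms are absorbed for $\tau\ge\tau_0(\delta_0,\gamma_0,\lambda,\mu)$.

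The commutator terms from $\chi$ are supported in an inner annulus near $s_1$ and an outer annulus near $s_3$. The usual comparison of $e^{\tau\varphi}$ on the three scales then gives $\|u\|_{L^2(B_{s_2})}\le C e^{-c\tau}\|u\|_{H^1(B_{s_3})}+Ce^{C\tau}\|u\|_{H^1(B_{s_1})}$, after Caccioppoli to remove $H^1$ and $H^2$ norms. Optimizing in $\tau$ yields the $L^2$ three spheres inequality. Finally, interior estimates for this constant-coefficient elliptic system give $\|u\|_{L^\infty(B_r)}\le C r^{-3/2}\|u\|_{L^2(B_{2r})}$, with $C$ depending only on the a priori data and the bound on $\omega$. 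Combined with the trivial bound $\|u\|_{L^2}\le C\|u\|_{L^\infty}$, and after adjusting the radii slightly, this gives \eqref{eq:LBD60.2}.

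The main obstacle is the Carleman step with the coupling. The $u$-equation depends on $\nabla p$, so we need a Carleman estimate for the $p$-equation at the level of $\nabla p$, that is, an $H^1$-type weighted estimate with matching powers of $\tau$ and of $|x|$. The degeneracy of the weight at the origin must be handled so that the constants remain uniform in the scaled parameter $s_3\omega$. I would first try the standard argument for $\Delta$ applied to each component of $\nabla p$, since $\partial_j p$ also solves the Helmholtz equation, which removes the derivative loss altogether.
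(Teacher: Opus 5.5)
Your proposal is correct and takes essentially the same route as the paper, which gives no argument of its own and simply quotes \cite[Theorem 1.1]{LiNUW}, a result proved by the same Carleman-estimate strategy (reduction to a system with Laplacian-type principal part, and a weight singular at the center). Your constant-coefficient version is sound: the triangular system for $u$ and $\nabla p$, absorption of the zero-order and coupling terms for large $\tau$, and interior regularity to pass from $L^2$ to $L^\infty$. The one slip is that the commutator terms for $\chi\nabla p$ involve third derivatives of $u$, so you need higher-order interior estimates rather than plain Caccioppoli, which is harmless for constant coefficients.

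Your remark that the stated dependence on $\delta_0$ only makes sense when the rescaled frequency $s_3\omega$ is controlled, i.e.\ when $s\le Cr_0$ as in every use of the lemma inside $\Omega^\sharp$, is not just convenient but necessary. High-degree spherical-harmonic P-waves are exponentially small in $B_{s_1}$ but not in $B_{s_2}$, so they rule out uniform constants when $s_3\omega$ is large.
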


In the following theorem we state the bound from above for the function $S$.

% da inserire qui 

\begin{theorem}\label{upper}
Under the notation of Lemma \ref{lemma 4.2}, let 

\begin{equation}
	\label{eq:23.1}
	y_h = P - h e_3,
\end{equation}
\begin{equation}
	\label{eq:23.2}
	w_h = P -\lambda_w h e_3, \quad 0<\lambda_w < 1,
\end{equation}
with
\begin{equation}
	\label{eq:23.3}
	0<h\leq \overline{d} \left ( 1 - \frac{\sin \widetilde{\vartheta}_0}{4} \right ),
\end{equation}
where $\widetilde{\vartheta}_0 = \arctan \frac{1}{L_0}$ and $n=-e_3$ is the
outer unit normal to $D_1$ at $P$. Then, for every $l$, $m\in\R^3$, $|l|=|m|=1$, we have
\begin{equation}
	\label{eqn:upperBound}
	|S(y_h, w_h; l,m)| \leq \frac{C_0}{\lambda_w h } \epsilon ^{ C_1 \left (
		\frac{h}{r_0}\right)^{C_2}},
\end{equation}
where the positive constants $C_0, C_1, C_2$ only depends on the a priori data.

\end{theorem}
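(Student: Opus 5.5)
The plan follows the scheme of \cite{AleDiCMorRos14} and \cite{Beretta2017}: first obtain a smallness bound for $S$ at points far from the inclusions, then propagate it towards $P$ along the path $\gamma$ of Lemma \ref{lemma 4.2} by iterating the three spheres inequality (Lemma \ref{lemma:3spheres}).

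\textbf{Step 1 (smallness in $\Omega_0$).} For $y,w\in B_{r_\sharp}(P_0)\subset\Omega_0$, the functions $u_1=G_1(\cdot,y)l$ and $u_2=G_2(\cdot,w)m$ solve the equations for $D_1$ and $D_2$ in $\Omega$, because their singularities lie outside $\overline\Omega$. Their traces on $\partial\Omega$ are not supported in $\Sigma$, however, so they are not admissible elements of $\mathcal{C}_i$ as they stand.
\begin{itemize}
\item Since $G_i(\cdot,y)=0$ on $\partial\oms$ and $\partial\Omega\setminus\Sigma_0\subset\partial\oms$, the traces $u_i|_{\partial\Omega}$ are in fact supported in $\overline{\Sigma_0}\subset\Sigma$. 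I would verify this carefully.
\item Once it holds, $(u_i|_{\partial\Omega},(\C_{D_i}\nh u_i)n)\in\mathcal{C}_i$.
\item The identity \eqref{eqn: AlIdentity2}, together with \eqref{stimaCauchy}, gives $S(y,w;l,m)$ directly as the left-hand side, since $\C_{D_2}-\C_{D_1}=(\C^I-\C)(\chi_{D_2}-\chi_{D_1})$, up to sign.
\end{itemize}
The $\mathcal{H}$-norms of the Cauchy data are bounded by the trace theorem and \eqref{eqn: boundGh1}, using that $\mathrm{dist}(y,\partial\Omega)\ge r_\sharp$. This yields
\[
|S(y,w;l,m)|\le C\epsilon\, r_0^{-2}\quad\text{for } y,w\in B_{r_\sharp}(P_0).
\]
The crude a priori bound $|S(y,w;l,m)|\le C\,(\mathrm{dist}(y,\Omega_D)\,\mathrm{dist}(w,\Omega_D))^{-1/2}$ follows from Cauchy--Schwarz and \eqref{eqn: boundGh1}, \eqref{eqn: boundGL2}. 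This is the weak $L^\infty$ bound needed as the $\|u\|_{L^\infty(B_{s_3})}$ factor.

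\textbf{Step 2 (propagation in $w$, then in $y$).} Fix $y=\overline y\in B_{r_\sharp}(P_0)$ and consider $\widehat S(\overline y,\cdot;l)$, which solves \eqref{divf} in $\oms\setminus\Omega_D$. Apply Lemma \ref{lemma:3spheres} along a chain of balls of radius comparable to $R$ covering $\gamma$ inside $V(\gamma)$, from $P_0$ to $P+\bar d\nu$. This gives a Hölder-type bound $C\epsilon^{\tau^{N}}$ with $N$ bounded by the a priori data.

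Then enter the cone $C(P,\nu,\cdot,\cdot)$ towards $w_h$ using balls whose radii shrink geometrically with the distance to $P$. The number of balls needed to reach $w_h$ is about $\log(\bar d/(\lambda_w h))$, so the exponent becomes $\tau^{C\log(r_0/h)}=(h/r_0)^{C_2}$. At each step the $L^\infty$ upper factor is controlled by the a priori bound, which produces the prefactor.

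Next, fix $w=w_h$ and repeat the argument in the variable $y$ for $\widetilde S(\cdot,w_h;m)$ via \eqref{divftilde}, going from $P_0$ down the cone to $y_h$. Throughout, \eqref{eq:doppia disug per S} is used to pass between the scalar $S$ and the vector fields $\widehat S$, $\widetilde S$. Combining the two exponents (their product is again of the form $C_1(h/r_0)^{C_2}$) and keeping the a priori factor $(\lambda_w h)^{-1}$ gives \eqref{eqn:upperBound}.

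\textbf{Main obstacle.} The delicate part is Step 1: the traces must genuinely lie in $H^{1/2}_{00}(\Sigma)$, and the $\mathcal{H}$-norms must be controlled uniformly, the latter requiring an interior elliptic estimate for the tractions. A second difficulty is the bookkeeping of the three spheres iteration near the vertex of the cone, where the a priori bound blows up as $h^{-1}$. Since $\omega\le\delta_0/r_0$, the constant $\tau$ in Lemma \ref{lemma:3spheres} is uniform across scales.
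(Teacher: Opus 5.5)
Your proposal is correct and follows the paper's proof. The paper first obtains smallness of $S$ for $y,w\in B_{r_\sharp}(P_0)$ from \eqref{stimaCauchy}, taking $u_1=G_1(\cdot,y)l$ and $u_2=G_2(\cdot,w)m$ and bounding the Cauchy data via \eqref{eqn: boundGh1}; it then iterates the three spheres inequality, first over the constant-radius chain in $V(\gamma)$ and then over the shrinking balls in the cone, for $\widehat S$ in $w$ and afterwards for $\widetilde S$ in $y$, as in \cite[Theorem 6.4]{AleDiCMorRos14}. Your check that $G_i(\cdot,y)$ vanishes on $\partial\Omega\setminus\Sigma_0$, so that its Cauchy data are admissible elements of $\mathcal{C}_i$, is a step the paper uses without stating it.
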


\begin{proof} 
	Let $P_0\in \om_0$, the point satisfying \eqref{eq:ball} and \eqref{eq:tubo-P0-Psigma} with $\text{dist}(P_0,\partial\Omega)>2r_\sharp$.  By the inequality \eqref{stimaCauchy} 
	with the choices $u_1(\cdot)=(G_1(\cdot, y)) l$ and $u_2(\cdot)=(G_2(\cdot, w)) m$ 
	we have that 
	\begin{equation}\label{stimafCauchy}
		|S(y,w,l,m)|\leq d(\CC_1,\CC_2) \|(G_1(\cdot,y),(\C_1\nh G_1(\cdot,y)) n)\|_{\mathcal{H}}
		\cdot \|(G_2(\cdot,w),(\C_2\nh G_2(\cdot,w)) n)\|_{\mathcal{H}}.
	\end{equation}
	By the definition of the $\mathcal{H}$ norm and by \eqref{eqn: boundGh1} we have that 
	\begin{eqnarray}\label{G1}
		&& \|(G_1(\cdot,y),(\C_1\nh G_1(\cdot,y)) n)\|^2_{\mathcal{H}}  \le C \|G_1(\cdot,y)\|^2_{H_{00}^{\frac{1}{2}}(\Sigma)} + C \|(\C_1 \nh G_1(\cdot,y)) n\|^2_{H_{00}^{-\frac{1}{2}}(\partial\Omega)|_{\Sigma}}\le\nonumber\\
		&& \leq C \|G_1(\cdot,y)\|_{H^1(\oms\setminus B_{r_\sharp}(y))}\le C r_0^{-1}r_\sharp^{-1} 
	\end{eqnarray}
	and analogously we have that 
	\begin{eqnarray}\label{G2}
		\|(G_2(\cdot,w),(\C_2\nh G_2(\cdot,w)) n)\|^2_{\mathcal{H}}  \le C r_0^{-1}r_\sharp^{-1} \ .
	\end{eqnarray}
	Hence by gathering \eqref{stimafCauchy}, \eqref{G1} and \eqref{G2} we obtain 
	\begin{equation}\label{stimafCauchy2}
		|S(y,w;l,m)|\leq C	\frac {\epsilon}{r_0} \ \ \ \ \forall \  y,w\in B_{r_\sharp}(P_0),
	\end{equation}
	where $C>0$ is a constant depending on the a priori data only. 
	%The inequality \eqref{stimafCauchy}
	
	%From now we can repeat the previou arguments by substituting the estimate \eqref{step1-f} with \eqref{stimafCauchy2} and the thesis follows. 
	
	The proof continues according to the scheme proposed in \cite{AleDiCMorRos14}[Theorem 6.4].  Namely, by an iterated use of the three spheres inequality for the  solution $\widehat{S}(y, \cdot;l,m)$, first over a chain of balls of constant radius and then over a chain of balls of decreasing radius and tangent to a cone with vertex at the point $P$ introduced in Lemma  \ref{lemma 4.2}. Finally the same iteration is performed for the solution $\tilde{S}(\cdot,w_h;l,m)$ leading to \eqref{eqn:upperBound}.

\end{proof}

\subsection{Lower bound for the singular solutions}

%Reference: \cite[Theorem 6.5]{AleDiCMorRos14}
\begin{theorem}
	Under the notation of Lemma \ref{lemma 4.2}, let $y_h = P-he_3$. For every $i=1,2,3$ there exists $\lambda_w\in \Big\{\frac{2}{3},\frac{3}{4},\frac{4}{5} \Big\}$ and there exists $\overline{h}\in (0,1/2)$ depending only on the a priori data such that 
    \begin{equation}\label{eqn:lowerBound}
        |S(y_h, w_h; e_i,e_i)| \geq \frac{C_3}{h}\qquad \text{with }0<h\leq \overline{h}r,
    \end{equation}
    where
    \begin{align}
        w_h &= P-\lambda_w\,h\,e_3,\\
        r &= \min\Big\{dist(P,D_2),\overline{c}r_0 \Big\}\label{eq: raggio r}
    \end{align}
    with $\bar c=\frac{\min\{1,M_0\}}{12\sqrt{1+M_0^2}}$.
\end{theorem}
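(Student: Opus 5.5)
We follow the scheme of \cite[Theorem 6.5]{AleDiCMorRos14}, treating the density term and the Green's function corrections as lower order perturbations. First, by a rigid change of coordinates we may assume $P=0$ and that, near $P$, $D_1$ is the region above the graph of a $C^{1,\alpha}$ function $\varphi$ with $\varphi(0)=0$, $\nabla\varphi(0)=0$. We then split
\[
S(y_h,w_h;e_i,e_i)=I_1+I_2,
\]
where $I_1$ is the part of $S_{D_1}$ coming from $D_1\cap B_r(P)$, and $I_2$ collects the contributions of $D_1\setminus B_r(P)$ and all of $S_{D_2}$. Since $r\le \operatorname{dist}(P,D_2)$, the set $D_2$ lies at distance at least $r$ from $P$. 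Because $y_h,w_h$ lie within $h\le \overline h r$ of $P$, both points stay at distance comparable to $r$ from $D_2\cup(D_1\setminus B_r(P))$.

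Next we bound $I_2$ using Proposition \ref{eqn: gamma estimates} and Proposition \ref{GreenD1}. The gradient bounds \eqref{stimagradienteGamma}, together with the $H^1$ control \eqref{eqn: boundDiff} and \eqref{eqn: boundGh1} of $G_i-\Gamma^{D_i}$, show that the elastic part of $I_2$ is at most $C\int_{|x|>r}|x|^{-4}\,dx+\dots\le C r^{-1}$. The $\omega^2$ terms are controlled by \eqref{eqn: boundGL2} and \eqref{stimaGamma}; by \eqref{eqn: omega_bounds} they contribute at most $C r_0^{-2}\int |x-y_h|^{-1}|x-w_h|^{-1}\,dx\le C r_0^{-1}\le C r^{-1}$.

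For $I_1$ we replace $G_1(\cdot,y_h)$ and $G_2(\cdot,w_h)$ by $\Gamma^{D_1}$ and $\Gamma$ respectively. The choice of $\Gamma$ for $G_2$ is justified because near $P$ the point $w_h$ sees only the background material, as $D_2$ is far away. The errors are bounded by $C r_0^{-1}\|\nh\Gamma\|_{L^2(B_r\setminus B_{h/2}(w_h))}\le C(r_0 h)^{-1/2}$, which is $o(1/h)$ only after one more step, so we keep careful track of it. We then flatten the interface: the difference between $D_1\cap B_r$ and the half space $\R^3_+\cap B_r$ is a thin region of width $C|x'|^{1+\alpha}/r_0^{\alpha}$, and $\Gamma^{D_1}$ is close to the Rongved solution $\Gamma^+$ with error $O(|x-y|^{-2+\alpha})$. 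These replacements contribute $C h^{-1+\alpha}r_0^{-\alpha}$. By scaling, the leading term becomes
\[
\frac1h\,\Phi_i(\lambda_w),\qquad \Phi_i(\lambda)=\int_{\R^3_+}(\C^I-\C)\nh\Gamma^+(x,-e_3)e_i\cdot\nh\Gamma(x,-\lambda e_3)e_i\,dx .
\]

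The main obstacle is to show that, for each $i$, $\Phi_i(\lambda)$ is bounded away from zero, by a constant depending only on the a priori data, for at least one $\lambda\in\{2/3,3/4,4/5\}$. Following \cite[Sect. 10]{AleDiCMorRos14}, we use the explicit formulas for $\Gamma$ and $\Gamma^+$ to express $\Phi_i$ as a rational function of $\lambda$ whose coefficients are polynomials in the Lamé moduli. We then argue that if $\Phi_i$ vanished at all three values of $\lambda$, these coefficients would have to vanish. By \eqref{eqn: jump}, this would force $\lambda=\lambda^I$ and $\mu=\mu^I$, a contradiction; compactness of the admissible parameter set then gives a uniform lower bound. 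Finally, we choose $\overline h$ small enough that all the error terms are below $\frac{1}{2}|\Phi_i|/h$. Here we use $h\le\overline h r$ and $r\le\overline c r_0$, and the estimate \eqref{eqn:lowerBound} follows.
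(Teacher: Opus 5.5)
Your proposal is correct and follows essentially the same route as the paper. You split off $S_{D_2}$ and the contribution of $D_1\setminus B_r$ (both $O(1/r)$), isolate the leading term over $B_r^+$ built from $\Gamma^{+}(\cdot,y_h)$ and $\Gamma(\cdot,w_h)$, and bound the interface-flattening, Green's-function-correction and $\omega^2$ remainders by $C h^{\alpha-1}r_0^{-\alpha}$, $C(r_0h)^{-1/2}$, $C/r$ and $C/r_0$. The non-vanishing of the leading coefficient for some $\lambda_w\in\{2/3,3/4,4/5\}$ comes from the explicit computation in \cite{AleDiCMorRos14}; the paper simply quotes its estimate (9.20), $|I_1|\geq C_0/h-C_2/r$, rather than sketching the rational-function argument as you do.

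Two minor inaccuracies, neither of which affects the choice of $\overline{h}$, since any positive power of $h/r_0$ and $h/r$ suffices:
\begin{itemize}
\item The available asymptotic estimate for $\nabla\Gamma^{D_1}-\nabla\Gamma^{+}$ (\cite[Section 8]{AleDiCMorRos14}) gives the smaller exponent $\gamma=\alpha^2/(3\alpha+2)$, i.e.\ a remainder $\frac{C}{r_0}(h/r_0)^{\gamma-1}$, not one with exponent $\alpha$.
\item The $\Gamma^{D_2}-\Gamma$ part of your replacement of $G_2$ by $\Gamma$ contributes $O(1/r)$, not $O((r_0h)^{-1/2})$.
\end{itemize}
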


%{\color{blue} Equazione (9.3) \cite{AleDiCMorRos14}.}

\begin{proof}
    Without loss of generality, we assume $P\equiv O$ and $e_3=-n$, where $n$ is the outer unit normal to $D_1$ at $O$. Let $h\leq \bar{h}r$ with $r$ defined by \eqref{eq: raggio r} and $\bar{h}\in (0,\frac{1}{2})$ to be chosen later. 
    From \eqref{eqn: f}, we have
    \begin{equation}\label{eqn: triangle_ineq}
    |S(y_h,w_h;l,m)| \geq |S_{D_1}(y_h,w_h;l,m)| - |S_{D_2}(y_h,w_h;l,m)|.
    \end{equation}
    In order to estimate $S_{D_1}$ in \eqref{eqn: triangle_ineq} from below, we write
    \begin{equation}\label{eqn: first plit Sd1}
        \begin{split}
            S_{D_1}&(y_h,w_h;l,m) \\
            &=\int_{D_1\cap B_{r}} (\C^{I} - \C) \nh (\Gamma^{+}(x,y_h)l)\cdot\nh(\Gamma(x,w_h)m)\\
            &+ \int_{D_1\cap B_{r}} (\C^{I} - \C) \nh((G_1(x,y_h))l)\cdot \nh((G_2(x,w_h)-\Gamma(x,w_h))m)\\
            &+ \int_{D_1\cap B_{r}} (\C^{I} - \C)\nh((G_1(x,y_h)-\Gamma^{+}(x,y_h))l)\cdot \nh(\Gamma(x,w_h)m)\\
%            &+ \int_{D_1\cap B_{r}} (\C^{I} - \C)\nh(\Gamma^{+}(x,y_h)l)\cdot \nh((G_2(x,w_h)-\Gamma(x,w_h))m)\\
            &+\int_{D_1\setminus B_{r}} (\C^{I} - \C)\nh(G_1(x,y_h)l)\cdot \nh(G_2(x,w_h)m)\\
            &- \omega^{2}\int_{D_1\cap B_{r}} (\ri - \rho) ((G_1(x,y_h)-\Gamma^{D_1}(x,y_h))l)\cdot (G_2(x,w_h)m)\\
            &- \omega^{2}\int_{D_1\cap B_{r}} (\ri - \rho)(\Gamma^{D_1}(x,y_h)l)\cdot ((G_2(x,w_h)-\Gamma^{D_2}(x,w_h))m)\\
            &- \omega^{2}\int_{D_1\setminus B_{r}} (\ri - \rho)(\Gamma^{D_1}(x,y_h)l)\cdot (\Gamma^{D_2}(x,w_h)m).
        \end{split}
    \end{equation}

    Since the leading term of $S_{D_1}$ as $h\rightarrow 0$ is the first integral on the right-hand side of \eqref{eqn: first plit Sd1}, it is convenient to represent the domain of integration $D_1\cap B_{r}$ as:
    \begin{equation*}
        D_1\cap B_{r} = B^{+}_{r}\cup (D_1\cap B^{-}_{r})\setminus(B^{+}_{r}\setminus D_1).
    \end{equation*}
    We can rewrite $S_{D_1}(y_h,w_h;l,m)$ as:
    \begin{equation}
        S_{D_1}(y_h,w_h;l,m) = I_1 + R_1 + R_2 + R_3 - R_4 - R_5 -R_6,
    \end{equation}
    where
    \begin{align*}
        I_1 &= \int_{B^{+}_{r}} (\C^{I} - \C) \nh (\Gamma^{+}(x,y_h)l)\cdot\nh(\Gamma(x,w_h)m),\\
        R_1 &= \int_{D_1\cap B^{-}_{r}} (\C^{I} - \C) \nh (\Gamma^{+}(x,y_h)l)\cdot\nh(\Gamma(x,w_h)m) \\
        &- \int_{B^{+}_{r}\setminus D_1} (\C^{I} - \C) \nh (\Gamma^{+}(x,y_h)l)\cdot\nh(\Gamma(x,w_h)m),\\
        R_2 &= \int_{D_1\setminus B_{r}} (\C^{I} - \C)\nh(G_1(x,y_h)l)\cdot \nh(G_2(x,w_h)m),\\
        R_3 &= \int_{D_1\cap B_{r}} (\C^{I} - \C) \nh((G_1(x,y_h)-\Gamma^{+}(x,y_h))l)\cdot \nh(\Gamma(x,w_h)m)\\
            &+ \int_{D_1\cap B_{r}} (\C^{I} - \C)\nh(G_1(x,y_h)l)\cdot \nh((G_2(x,w_h)-\Gamma(x,w_h))m)\\
        R_4 &=\omega^{2}\int_{D_1\cap B_{r}} (\ri - \rho) ((G_1(x,y_h)-\Gamma^{D_1}(x,y_h))l)\cdot (G_2(x,w_h)m,\\
              R_5 &= \omega^{2}\int_{D_1\cap B_{r}} (\ri - \rho)(\Gamma^{D_1}(x,y_h)l)\cdot ((G_2(x,w_h)-\Gamma^{D_2}(x,w_h))m),\\
              R_6 &= \omega^{2}\int_{D_1\setminus B_{r}} (\ri - \rho)(\Gamma^{D_1}(x,y_h)l)\cdot (\Gamma^{D_2}(x,w_h)m).
    \end{align*}

    Hence, we have:
    \begin{equation}\label{eqn: lowerSd1}
        |S_{D_1}(y_h,w_h;l,m)| \geq |I_1| - |R_1| - |R_2| - |R_3| - |R_4|- |R_5|- |R_6|.
    \end{equation}
%    Notice that the terms of lower order in $R_4$, $R_5$, $R_6$ have singularities that can be bounded with the same techniques that we will introduce for the gradient. However, since their singularities have a weaker blow-up than the ones of $I_1$, we will focus on the terms $I_1, R_1, R_2$ and $R_3$.

    \begin{itemize}
        \item {Lower bound for $I_1$}: from \cite[Equation (9.20)]{AleDiCMorRos14} we have
        \begin{equation}\label{eqn: estimateI1}
            |I_1| \geq \frac{C_0}{h} - \frac{C_2}{r}.
        \end{equation}
        \item {Upper bound for $R_1$}: by \eqref{stimagradienteGamma} and \eqref{eqn: Kelvin}, and by the change of variables $y=\frac{x}{h}$, we derive
        \begin{equation}
            \begin{split}
                |R_1| &\leq C\int_{\R^{2}} \Big(\int_{-\frac{M_0}{r^{\alpha}_0}|x'|^{1+\alpha}}^{\frac{M_0}{r^{\alpha}_0}|x'|^{1+\alpha}} |x-y_h|^{-2} |x-w_h|^{-2} \diff x_3 \Big) \\
                &\leq C\int_{\R^{2}} \Big(\int_{-\frac{M_0}{r^{\alpha}_0}|x'|^{1+\alpha}}^{\frac{M_0}{r^{\alpha}_0}|x'|^{1+\alpha}} \frac{1}{(|x'|^{2} + (x_3 + h)^{2})(|x'|^{2} + (x_3 + \lambda_w h)^{2})}\diff x_3 \Big) \\
                &= \frac{C}{h} \int_{\R^{2}} \Big(\int_{-\frac{M_0}{r^{\alpha}_0}h^{\alpha}|y'|^{1+\alpha}}^{\frac{M_0}{r^{\alpha}_0}h^{\alpha}|y'|^{1+\alpha}} \frac{1}{(|y'|^{2} + (y_3 + 1)^{2})(|y'|^{2} + (y_3 + \lambda_w)^{2})}\diff y_3 \Big) .
            \end{split}
        \end{equation}
        Following a procedure similar to \cite[pag. 2714]{AleDiCMorRos14} we derive
        \begin{equation}\label{eqn: estimateR1}
            |R_1|\leq \frac{C_1}{r_0}\Big(\frac{h}{r_0} \Big)^{\alpha-1},
        \end{equation}
        where the constant $C>0$ depends on $M_0, \alpha, \alpha_0, \gamma_0, \overline{\lambda}, \overline{\mu}$.
        \item {Upper bound for $R_2$}: since $h\leq \frac{r}{2}$, and $\lambda_w\in (0,1)$, we have that $B_{\frac{\bar h r}{2}}(y_h)\subset B_{r}$ for a certain $\bar h\in (0,1/2)$. Hence, $D_1\setminus B_{r} \subset \oms \setminus B_{\frac{\bar h r}{2}}(y_h)$. By Cauchy-Schwarz inequality it follows that
        \begin{equation*}
            \begin{split}
                |R_2| &\leq \|\C^{I} - \C\|_{L^{\infty}(\oms)} \|\nh G_1(\cdot,y_h)\|_{L^{2}(D_1\setminus B_{r})} \|\nh G_2(\cdot,w_h)\|_{L^{2}(D_1\setminus B_{r})}.
                \end{split}
        \end{equation*}
        By \eqref{eqn: boundGh1}
        \begin{equation*}
                \int_{D_1\setminus B_{r}} |\nh G_1(x,y_h)|^{2} \diff x \leq C \int_{\oms \setminus B_{\frac{\bar h r}{2}}(y_h)} |\nh G_1(x,y_h)|^{2} \diff x \leq C \Big(\frac{\bar h r}{2} \Big)^{-1}.
        \end{equation*}
        Hence, we conclude
        \begin{equation}\label{eqn: estimateR2}
            |R_2| \leq \frac{C_2}{r}.
        \end{equation}
        \item {Upper bound for $R_3$}: We begin by setting
        \begin{equation}
            R_3 = R_3' + R_3''+ R_3'''+ R_3'''',
        \end{equation}
        where
        \begin{align}
            R_3' &= \int_{D_1\cap B_{r}} (\C^{I} - \C) \nh((G_1(x,y_h)-\Gamma^{D_1}(x,y_h))l)\cdot \nh(\Gamma(x,w_h)m),\\
            R_3'' &= \int_{D_1\cap B_{r}} (\C^{I} - \C) \nh((\Gamma^{D_1}(x,y_h)-\Gamma^{+}(x,y_h))l)\cdot \nh(\Gamma(x,w_h)m),\\
            R_3''' &= \int_{D_1\cap B_{r}} (\C^{I} - \C)\nh(G_1(x,y_h)l)\cdot \nh((G_2(x,w_h)-\Gamma^{D_2}(x,w_h))m),\\
            R_3'''' &= \int_{D_1\cap B_{r}} (\C^{I} - \C)\nh(G_1(x,y_h)l)\cdot \nh((\Gamma^{D_2}(x,w_h)-\Gamma(x,w_h))m).
        \end{align}
        For the first quantity, by \eqref{eqn: boundDiff}:
        \begin{multline}\label{eqn: estimateR31}
                |R_3'| \leq Cr_0^{3/2} \|\nh(G_1(\cdot,y_h)-\Gamma^{D_1}(\cdot,y_h))\|_{L^{2}(\oms)} 
                \left(\int_{D_1\cap B_r}|\nh\Gamma(\cdot,w_h)|^2\right)^{1/2}
                \leq\\
                \leq C{r_0}^{-1/2}\left(\int_{D_1\cap B_r}|\nh\Gamma(\cdot,w_h)|^2\right)^{1/2}.
        \end{multline}
        
          Since $x\in D_1\cap B_{r}$, then $|x-w_h|\geq \frac{\lambda_w h}{\sqrt{1+M_0^2}}$, hence $B_{r} \cap D_1 \subset \R^{3}\setminus B_{\frac{\lambda_w h}{{\sqrt{1+M_0^2}}}}(w_h)$. 
        
         We have that
        \begin{equation}\label{stimaGamma}
        	\begin{split}
        	\int_{D_1\cap B_r}|\nh\Gamma(\cdot,w_h)|^2 
        		\leq C \int_{D_1\cap B_{r}} |x-w_h|^{-4} \diff x\le \int_{|x-w_h|\geq \frac{\lambda_w h}{{\sqrt{1+M_0^2}}}} |x-w_h|^{-4} \diff x = \\
        		=\int_{|y|\geq \frac{\lambda_w h}{{\sqrt{1+M_0^2}}}} |y|^{-4} \diff y=  \frac{C}{h}.
        	\end{split}
        \end{equation}
       
       Hence, combining  \eqref{eqn: estimateR31} and \eqref{stimaGamma} we have 
        \begin{equation}\label{eqn: estimateR3first}
       	|R_3'| \leq C r_0^{-1/2} h^{-1/2}.
       \end{equation}
       
       Similarly we have the following estimate
       
        \begin{equation}\label{eqn: estimateR3ter}
        	|R_3'''| \leq Cr_0^{-1/2} h^{-1/2}.
        \end{equation}
        
        The proof of the  estimates for  $R_3''$ and $R_3''''$ are based on the asymptotic   estimates for $\nabla \Gamma^D$ contained in \cite[Section 8]{AleDiCMorRos14} and for brevity we omit the proof. We obtain:
        
        \begin{equation}\label{eqn: estimateR3bis}
        	|R_3''| \leq \frac{C}{r_0}\left(\frac{h}{r_0}\right)^{\gamma -1},
        \end{equation}
        where $\gamma=\frac{\alpha^2}{3\alpha +2}$\,
        \begin{equation}\label{eqn: estimateR3quater}
        	|R_3''''| \leq \frac{C}{r}\ . 
        \end{equation}

\item
{Upper bound for $R_4$}:

By \eqref{eqn: boundDiff}--\eqref{eqn: boundGh1}, we have
              \begin{multline}\label{eqn: estimateR4}
      	|R_4| \leq \frac{C}{r_0^2}\left(\int_{D_1\cap B_r}\left|G_1(x,y_h)-\Gamma^{D_1}(x,y_h)\right|^2\right)^{1/2}
      \left(\int_{ \oms\setminus B_{\frac{\lambda_w h}{\sqrt{1+M_0^2}}}(w_h) }
      \left|G_2(x,w_h)\right|^2\right)^{1/2}\leq\\
      \leq C r_0^{-1/2}h^{-1/2}.
      \end{multline}      
       
 \item
 {Upper bound for $R_5$}:
 
 By \eqref{eqn: boundDiff}--\eqref{stimaGamma}, we have
 \begin{multline}\label{eqn: estimateR4}
 	|R_5| \leq \frac{C}{r_0^2}\left(\int_{D_1\cap B_r}\left|G_2(x,w_h)-\Gamma^{D_2}(x,w_h)\right|^2\right)^{1/2}
 	\left(\int_{B_{r+h}(y_h) }
 	\left|\Gamma^{D_1}(x,y_h)\right|^2\right)^{1/2}
 	\leq C r_0^{-1}.
 \end{multline}

   \item
  {Upper bound for $R_6$}:
  
  By \eqref{stimaGamma}, we have
  \begin{multline}\label{eqn: estimateR4}
  	|R_6| \leq \frac{C}{r_0^2}
  	\left(\int_{B_{r+h}(y_h) }
  	\left|\Gamma^{D_1}(x,y_h)\right|^2\right)^{1/2}
  	\left(\int_{B_{r+h}(w_h) }
  	\left|\Gamma^{D_2}(x,w_h)\right|^2\right)^{1/2}
  	\leq C r_0^{-1}.
  \end{multline}

    Collecting together \eqref{eqn: lowerSd1}, \eqref{eqn: estimateI1}, \eqref{eqn: estimateR1}, \eqref{eqn: estimateR2}, \eqref{eqn: estimateR3first}--\eqref{eqn: estimateR4}, we have
    \begin{multline}
    	\label{eqn: lowerestimateS_D1}
        |S_{D_1}(y_h,w_h;l,m)| \geq \frac{C_0}{h} - \frac{C_1}{r_0}\Big(\frac{h}{r_0} \Big)^{\alpha-1} - \frac{C_2}{r} - C_3 r_0^{-1/2}h^{-1/2} 
        - \frac{C_1}{r_0}\Big(\frac{h}{r_0} \Big)^{\gamma-1} \geq\\
        \geq \frac{\tilde C}{h} \Big(1 - \frac{h}{r} 
        - \frac{h}{r_0} - \left(\frac{h}{r_0}\right)^\gamma -
        \left(\frac{h}{r_0}\right)^\alpha  - \left(\frac{h}{r_0}\right)^{1/2} \Big).
    \end{multline}

\item
 {Upper bound for $S_{D_2}$}: 

Let us estimate {}from above the first addend of $S_{D_2}(y_h,w_h;l,m)$. 
Recalling that $D_2\cap B_r=\emptyset$ and $h<\frac{r}{2}$, for every $x\in D_2$ we have   $|x-y_h|\geq |x|-h\geq r-h>\frac{r}{2}$, $|x-w_h|\geq |x|-h\geq r-h>\frac{r}{2}$. Therefore, by applying Schwarz inequality and \eqref{eqn: boundGh1} we have that

\begin{multline}
	\label{stimaS_D2}
	\left|\int_{D_2}(\C^I-\C)(x)\nh G_1(x,y_h)l\cdot \nh G_2(x,w_h)m  \right|\leq\\
	\leq C\left(\int_{\oms\setminus B_{r/2}(y_h)}|\nh  G_1(x,y_h)|^2 \right)^{1/2} \left(\int_{\oms\setminus B_{r/2}(y_h)}|\nh  G_2(x,w_h)|^2 \right)^{1/2}\leq
	\frac{C}{r},
\end{multline}

\begin{multline}
	\label{stimaS_D2}
	\left|\omega^2\int_{D_2}(\rho^I-\rho) G_1(x,y_h)l\cdot G_2(x,w_h)m \right|\leq\\
	\leq \frac{C}{r_0^2}\left(\int_{\oms\setminus B_{r/2}(y_h)}|  G_1(x,y_h)|^2 \right)^{1/2} \left(\int_{\oms\setminus B_{r/2}(y_h)}|  G_2(x,w_h)|^2 \right)^{1/2}\leq
	\frac{C}{r}
\end{multline}
\end{itemize}

By \eqref{eqn: lowerestimateS_D1} and \eqref{stimaS_D2}, it follows that there exists $\bar h$ only depending on the a priori data, such that, for every $h$, $0<h\leq\bar hr$, estimate \eqref{eqn:lowerBound} holds.
    
\end{proof}

\subsection{Proof of Theorem \ref{thm: stability2}}

\begin{proof}
	Let $P$ be the point introduced in Lemma   \ref{lemma 4.2}. Let us recall the definition \eqref{eq: raggio r}:
    \[
    r =\min\{dist(P,D_2), \bar{c}r_0\}.
    \]

    By combining the upper bound \eqref{eqn:upperBound}, with $l=m=e_i$ for $i\in \{1,2,3\}$, and the lower bound \eqref{eqn:lowerBound} for the singular solution $S$ as defined in equation \eqref{eqn: f}, we obtain that there exists
    $\lambda_w\in\{\frac{2}{3},\frac{3}{4},\frac{4}{5}\}$ such that
    \begin{equation}\label{eqn:combining}
    \frac{C_3}{h}\leq |S(y_h,w_h; e_i, e_i)|\leq \frac{C_0}{\lambda_w h}\epsilon^{C_1 \left(\frac{h}{r_0}\right)^{C_2}}\leq \frac{3C_0}{2 h}\epsilon^{C_1 \left(\frac{h}{r_0}\right)^{C_2}}\quad \text{for every } h,\,\,0<h\leq\bar{h}{r}.
    \end{equation}
    Taking the logarithm of the inequality \eqref{eqn:combining}, and recalling that $\varepsilon\in (0,1)$, we deduce
    \begin{equation}
        h\leq Cr_0 \Big(\frac{1}{|\log\varepsilon|}\Big)^{\frac{1}{C_2}}.
    \end{equation}
    In particular, choosing $h=\bar{h}r$ yields
    \[
    r \leq C r_0\Big( \frac{1}{|\log\varepsilon|}\Big)^{\frac{1}{C_2}}.
    \]
    Now, consider the two possible cases for $r$:
    \begin{itemize}
        \item If $r=dist(P,D_2)$, the results follows from \cite[Lemma 4.1 and 4.2]{AleDiCMorRos14}.
        \item If $r = \bar{c}r_0$, the results follows by observing that $d_h(\p D_1, \p D_2)\leq diam(\Omega)\leq Cr_0$, where $C$ depends on $M_0, M_1$.
    \end{itemize}
    
    \end{proof}

\section{The stability result for the local Dirichlet to Neumann map }
In this section, under an additional spectral hypothesis
on $\omega$, we prove the continuous dependence of the inclusion in terms of the local Dirichlet-to-Neumann map.

Precisely, let us define
\[
\lambda^{0}_1 = \min_{u\in H^{1}_0(\Omega)} r_0^2\cdot
\dfrac{\int_{\Omega} \C_0 \nh u \cdot \nh u\, }{\int_{\Omega} \gamma_0^{-1} |u|^2\, },
\]
where $\C_0$ is the constant linear tensor with components
\begin{equation}
	(C_0)_{ijkl} = \Big(\frac{\beta_0 - 2\overline{\mu}}{3} \Big)\delta_{ij} \delta_{kl} + \alpha_0 (\delta_{ki} \delta_{lj} + \delta_{li} \delta_{kj}).
\end{equation}
Notice that, by a scaling argument, $\lambda^{0}_1$ does not depend on $r_0$.

By our choice of $\C_0$ and by \eqref{eqn: rho_bounds}, we have that
\begin{equation}\label{princeigen}
	\frac{\lambda^{0}_1}{r_0^2}\leq
	\min_{u\in H^{1}_0(\Omega)} 
	\dfrac{\int_{\Omega} \C_D \nh u \cdot \nh u\, }{\int_{\Omega} \rho_D |u|^2\,} ,
\end{equation}
where the right hand side in \eqref{princeigen} is the principal eigenvalue of the operator $-\Div( \C_D\nh u)-\rho_D u$ in $\Omega$. The general theory of elliptic equations 
guarantees the well-posedness of the boundary value problem
\begin{equation}\label{eqn: standardElastic}
	\begin{cases}
		\operatorname{div}(\C_{D}\nh u)+\rho_D\omega^2 u=g &\text{in}~\Omega,\\
		u=f&\text{on}~\partial\Omega ,
	\end{cases}
\end{equation}
under the hypothesis 
\begin{equation}\label{spectral}
	0<\omega<  \frac{\sqrt{\lambda^{0}_1}}{r_0} \ .
\end{equation}

\begin{proposition}\label{prop: wellposedness}
	Let $\Omega\subset \R^{3}$ be a bounded domain with Lipschitz boundary with constants $r_0, M_0$ and let $D$ be a subdomain of $\Omega$ . Let $g\in H^{-1}(\Omega)$ and $f\in H^{1/2}(\partial\Omega)$. 
	Let $\C_{D}\in L^{\infty}(\Omega)$ be an isotropic elasticity tensor as in  \eqref{elastictensorwithinclusion} with elastic material satisfying the properties in $(iv)$ and $\rho_{D}\in  L^{\infty}(\Omega)$ as in \eqref{eqn: definition_rho}
	satisfying \eqref{eqn: rho_bounds}.
	Then, for any $\omega$ satisfying \eqref{spectral}
	there exists a unique solution of the boundary value problem \eqref{eqn: standardElastic}
	satisfying
	\begin{equation}
		\|u\|_{H^{1}(\Omega)} \leq C(\|f\|_{H^{1/2}(\partial \Omega)} + r_0^2\|g\|_{H^{-1}(\Omega)})
	\end{equation}
	where $C$ depends on $\alpha_0, \beta_0, \gamma_0$ and $\lambda^{0}_1$.
\end{proposition}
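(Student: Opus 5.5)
We reduce the problem to homogeneous Dirichlet data and then apply the Lax--Milgram lemma to a coercive bilinear form, with coercivity supplied by the spectral gap \eqref{spectral}.

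\emph{Reduction to zero boundary data.} Let $F\in H^1(\Omega)$ be an extension of $f$ with $\|F\|_{H^1(\Omega)}\le C\|f\|_{H^{1/2}(\partial\Omega)}$, where $C$ depends only on $M_0$ and the Lipschitz character. We look for $u=F+v$ with $v\in H^1_0(\Omega)$ solving, for all $\psi\in H^1_0(\Omega)$,
\[
B(v,\psi):=\int_\Omega \C_D\nh v\cdot\nh\psi-\omega^2\int_\Omega\rho_D v\cdot\psi
=-\langle g,\psi\rangle - B(F,\psi).
\]
The right-hand side is a bounded functional on $H^1_0(\Omega)$. With the dimensionally normalized norms of Remark \ref{rem:2.1}, its norm is bounded by $C(r_0^2\|g\|_{H^{-1}}+\|F\|_{H^1})$ up to the powers of $r_0$ absorbed by the normalization. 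Here we use $\omega^2\rho_D\le \gamma_0^{-1}\lambda_1^0 r_0^{-2}$ and the bounds \eqref{eqn: LameUpperBound}.

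\emph{Coercivity.} This is the key step. By \eqref{princeigen}, for every $v\in H^1_0(\Omega)$ we have
\[
\int_\Omega\rho_D|v|^2\le \frac{r_0^2}{\lambda_1^0}\int_\Omega\C_D\nh v\cdot\nh v .
\]
Hence
\[
B(v,v)\ge\Big(1-\frac{\omega^2r_0^2}{\lambda_1^0}\Big)\int_\Omega\C_D\nh v\cdot\nh v .
\]
The prefactor $\theta:=1-\omega^2r_0^2/\lambda_1^0$ is positive by \eqref{spectral}. Strong convexity gives $\C_D\nh v\cdot\nh v\ge\xi_0|\nh v|^2$. Korn's inequality on $H^1_0$ (where $\|\nabla v\|_{L^2}\le\sqrt2\|\nh v\|_{L^2}$) together with Poincar\'e then yields $B(v,v)\ge c\,\theta\, r_0\|v\|^2_{H^1(\Omega)}$, again in the normalization of Remark \ref{rem:2.1}.

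\emph{Conclusion.} The form $B$ is clearly bounded. Lax--Milgram therefore gives a unique $v$, and hence a unique $u$; uniqueness of $u$ follows because the difference of two solutions lies in $H^1_0$ and has $B(w,w)=0$. Combining the coercivity bound with the bound on the right-hand side gives the stated estimate.

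\emph{Main obstacle.} The delicate point is the dependence of the constants. The constant carries the factor $\theta^{-1}$, so it depends on $\lambda_1^0$ and on how close $\omega$ is to $\sqrt{\lambda_1^0}/r_0$. I would read the statement as implicitly fixing a margin in \eqref{spectral}, e.g.\ $\omega\le(1-\kappa)\sqrt{\lambda_1^0}/r_0$, or else let $C$ depend on that gap. The Poincar\'e constant also depends on $M_1$, through $\mathrm{diam}(\Omega)\le M_1r_0$. To avoid the Poincar\'e constant in the lower-order part, I would first try splitting $\int\C_D\nh v\cdot\nh v$ as $\theta/2$ plus $\theta/2$: one half controls $\|\nh v\|^2$, and the other half, via \eqref{princeigen}, controls $r_0^{-2}\|v\|_{L^2}^2$ with constants depending only on $\gamma_0$ and $\lambda_1^0$.
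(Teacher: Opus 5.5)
Your proof is correct, and it is the standard argument behind the paper's one-line appeal to ``the general theory of elliptic equations''; the paper writes out no proof of this proposition. You reduce to $H^1_0(\Omega)$, get coercivity $B(v,v)\ge\theta\int_\Omega\C_D\nh v\cdot\nh v$ from \eqref{princeigen} and \eqref{spectral}, use strong convexity and Korn on $H^1_0(\Omega)$, apply Lax--Milgram, and your splitting trick handles the $L^2$ part without a Poincar\'e constant.

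Your caveat is also well founded. The constant carries $\theta^{-1}=(1-\omega^2r_0^2/\lambda_1^0)^{-1}$, and through the extension of $f$ and the boundedness of $B$ it also depends on $M_0$, $\bar\lambda$, $\bar\mu$. So the dependence claimed in the statement holds only if \eqref{spectral} is imposed with a fixed margin, or if $C$ is allowed to depend on that margin.
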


We consider the local Dirichlet-to-Neumann map
% %
\begin{equation}\label{eqn: DtoN}
	\Lambda_D^{\Sigma}: H^{1/2}_{00}(\Sigma) \rightarrow
	H^{-1/2}_{00}(\Sigma),
\end{equation}
which can be characterised as the bilinear form on $H^{1/2}_{00}(\Sigma) \times H^{1/2}_{00}(\Sigma)$
\begin{equation}
	\la \Lambda_D^{\Sigma} f, g\ra =\int_\Omega \C_D \nh u \cdot \nh v - \rho_D \omega^{2} u\,\cdot\,v,
\end{equation}
for all $f,g\in H^{1/2}_{00}(\Sigma)$, where $u$ solves \eqref{eqn: standardElastic}, $v$ is any  $H^1(\Omega)$ function such that $v=g$ on $\p\Omega$ in the trace sense and $\omega$ satisfies \eqref{spectral}.

We denote with $\|\cdot\|_*$ the operator norm in $\mathcal{L}(H^{1/2}_{00}(\Sigma),H^{-1/2}_{00}(\Sigma))$.

\begin{theorem}\label{thm: stability}
	Let $\Omega \subset \R^3$ satisfy
	\eqref{eqn: domainstart}--\eqref{eqn: domainend} and let $D_1$, $D_2$ be two inclusions contained in $\Omega$ satisfying
	\eqref{eqn: inclstart}--\eqref{eqn: inclend}. 
	Let $\Sigma\subset\partial\Omega$ satisfy \eqref{eqn: ipSigma}.
	Let $\C_{D_1}$ and $\C_{D_2}$ be the isotropic elastic tensors as in \eqref{elastictensorwithinclusion} with $D=D_i$ $i=1,2$, respectively, satisfying  \eqref{eqn: LameCond}, \eqref{eqn: LameUpperBound}, \eqref{eqn: jump}  and let $\rho_{D_1}, \rho_{D_2}$ be the densities as in \eqref{eqn: definition_rho} satisfying \eqref{eqn: rho_bounds}. 
	Let the frequency $\omega$ satisfy \eqref{spectral}.
	
	Let $\Lambda^{\Sigma}_{D_1}$ and $\Lambda^{\Sigma}_{D_2}$ be the local Dirichlet-to-Neumann maps associated with $D_1, D_2$, respectively. If, for some $\epsilon$, $0<\epsilon<1$,
	\begin{equation}\label{eq:4.1}
		\| \Lambda_{D_1}-\Lambda_{D_2}\|_{\mathcal{L}(H^{1/2}(\partial \Omega), H^{-1/2}(\partial
			\Omega))}\leq \frac{\epsilon}{r_0},
	\end{equation}
	then
	\begin{equation}
		\label{eq:4.2}
		d_H(\partial D_1, \partial D_2) \leq r_0 \omega(\epsilon),
	\end{equation}
	where $\omega$ is an
	increasing function on $[0,+\infty)$ satisfying
	\begin{equation}
		\label{eq:4.3}
		\omega(t) \leq C |\log t |^{-\eta}, \ \hbox{for every } \
		0<t<1,
	\end{equation}
	where $C>0$ and $\eta$, with $0<\eta\leq 1$, are constants depending on the a priori data.
\end{theorem}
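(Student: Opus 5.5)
The plan is to reproduce the proof of Theorem \ref{thm: stability2}, changing only the first step of the upper bound: the Cauchy-data estimate \eqref{stimaCauchy} is replaced by an Alessandrini-type identity for the Dirichlet-to-Neumann map. Under the spectral hypothesis \eqref{spectral}, Proposition \ref{prop: wellposedness} guarantees that $\Lambda_{D_i}$ is well defined and symmetric. For solutions $u_i$ of $\Div(\C_{D_i}\nh u_i)+\rho_{D_i}\omega^2u_i=0$ in $\Omega$, subtracting the weak formulations \eqref{eqn: weakFormUi} with test functions $u_2$, $u_1$ and using the symmetry of $\Lambda_{D_2}$ gives
\[
\la(\Lambda_{D_2}-\Lambda_{D_1})u_1|_{\p\Omega},u_2|_{\p\Omega}\ra=\int_\Omega(\C_{D_2}-\C_{D_1})\nh u_1\cdot\nh u_2-\omega^2\int_\Omega(\rho_{D_2}-\rho_{D_1})u_1\cdot u_2 .
\]
Hence, by \eqref{eq:4.1}, the left-hand side is bounded by $\frac{\epsilon}{r_0}\|u_1\|_{H^{1/2}(\p\Omega)}\|u_2\|_{H^{1/2}(\p\Omega)}$.

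Next, I take $u_1=G_1(\cdot,y)l$ and $u_2=G_2(\cdot,w)m$ with $y,w\in B_{r_\sharp}(P_0)\subset\Omega_0$, and extend the coefficients by $\C$ and $\rho$ in $\Omega_0$. These functions solve the equations in $\Omega$ because their singularities lie outside $\overline\Omega$, and the right-hand side above then equals $-S(y,w;l,m)$, since $\C_{D_2}-\C_{D_1}=(\C^I-\C)(\chi_{D_2}-\chi_{D_1})$. By the trace theorem and \eqref{eqn: boundGh1} with $r=r_\sharp$, each boundary norm is at most $C\|G_i\|_{H^1(\oms\setminus B_{r_\sharp})}\le Cr_0^{-1}$, which gives $|S(y,w;l,m)|\le C\epsilon/r_0$ on $B_{r_\sharp}(P_0)$, exactly as in \eqref{stimafCauchy2}. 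One point needs checking: the Green's functions of Proposition \ref{GreenD1} live on $\oms$, so the spectral condition has to be arranged on $\oms$ as well. If it is not, I would instead use Green's functions on a slightly larger Lipschitz domain, or note that only the existence of $G_i$ is needed, and Proposition \ref{GreenD1} already gives that without any spectral assumption.

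From this seed estimate, I propagate the smallness as in the proof of Theorem \ref{upper}. $\widehat S$ and $\widetilde S$ solve the Lamé systems \eqref{divf} and \eqref{divftilde} in $\oms\setminus\Omega_D$, and by \eqref{stimaGamma} and Proposition \ref{GreenD1} they are bounded a priori by $C/\text{dist}$ to the singularities. I apply the three spheres inequality of Lemma \ref{lemma:3spheres} along the chain of balls inside $V(\gamma)$ from Lemma \ref{lemma 4.2}: first balls of constant radius, then balls of shrinking radius in the cone at $P$. Doing this first in $w$ and then in $y$ yields \eqref{eqn:upperBound}. This propagation step is the one that needs care: it requires $L^\infty$ bounds on $S$ throughout $\oms\setminus\Omega_D$ away from $y,w$, and it requires tracking the exponent $(h/r_0)^{C_2}$. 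Both are handled by the scheme in \cite{AleDiCMorRos14}, since Lemma \ref{lemma:3spheres} only depends on $\delta_0$.

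The lower bound \eqref{eqn:lowerBound} does not involve the data, so it carries over unchanged. Combining it with the upper bound exactly as in the proof of Theorem \ref{thm: stability2} gives $r\le Cr_0|\log\epsilon|^{-1/C_2}$. From there, Lemma \ref{lemma 4.2} and Lemma \ref{lemma-dist-modif} give the bound on $d_H(\p D_1,\p D_2)$, with $\eta=\min\{1,1/C_2\}$. The case $r=\bar c r_0$ is trivial because $\text{diam}\,\Omega\le M_1r_0$.
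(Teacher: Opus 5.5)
Your proposal is correct and follows the same route as the paper, which also derives the seed estimate $|S(y,w;l,m)|\le C\epsilon/r_0$ for $y,w\in B_{r_\sharp}(P_0)$ from the time-harmonic Alessandrini identity applied to $G_1(\cdot,y)l$ and $G_2(\cdot,w)m$, and then repeats the propagation of smallness, the lower bound and the final comparison from Theorem \ref{thm: stability2} (the traces of these functions on $\p\Omega$ vanish outside $\overline{\Sigma_0}\subset\Sigma$, so even the local maps suffice). On your caveat about solvability in $\oms$: the paper simply invokes Proposition \ref{GreenD1}, and what you need from it is the quantitative bounds \eqref{eqn: boundDiff}--\eqref{eqn: boundGL2}, not mere existence; passing to a larger domain would not help, since that only lowers the first Dirichlet eigenvalue.
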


\begin{proof}
Similarly to Theorem \ref{thm: stability2}, the proof is derived by comparing upper and lower bounds for the singular solutions near the boundary of the unknown inclusion.
The difference consists in expressing the upper bound in terms of the norm of the difference of the local DtoN maps.
To this aim, we recall the Alessandrini's identity \cite{A1} for the time-harmonic case: for $G_1(\cdot,y)l,\, G_2(\cdot,w)m\in H^1(\Omega)$,
\[
\big\langle (\Lambda_{D_1}^{\Sigma} - \Lambda_{D_2}^{\Sigma})\, G_1(\cdot,y)l,\; G_2(\cdot,w)m \big\rangle 
= S(y,w;l,m).
\]
for every $y,w\in B_{r_\sharp}(P_0)$, with $l,m\in \R^{3}$, $|l|=|m|=1$. Hence, by standard trace estimates and by Proposition \ref{GreenD1} it follows that 
\begin{equation}\label{step1-f}
	|S(y,w;l,m)| \le C \frac{\epsilon}{r_0}, \qquad \text{for all } y,w\in B_{r_\sharp}(P_0),
\end{equation}
where $C$ depends only on the a priori data.

From now on, we can repeat the previous arguments by substituting the estimate \eqref{stimafCauchy2} with \eqref{step1-f}. 

\end{proof}

% \begin{remark}
	%   \label{remark:localmap}
	%   In the case when $D_1$, $D_2$ are at a prescribed positive distance {}from $\partial\Omega$, it is also  possible to obtain a result analogous to the above Theorem when the
	%   Dirichlet-to-Neumann maps $\Lambda_{D_1}$, $\Lambda_{D_2}$ are replaced with local maps.
	%   For instance, fixing $Q\in\partial\Omega$ and given $\rho_1>0$, denoting
	%   $\Sigma=\partial\Omega\cap B_{\rho_1}(Q)$, we introduce
	%   \begin{equation*}
		%   H_{co}^{1/2}(\Sigma)=\{g\in H^{1/2}(\partial\Omega)\ |\ \hbox{supp}\ g\subset\subset\Sigma\}
		%   \end{equation*}
	%    and define
	%    \begin{equation*}
		%    \Lambda^\Sigma_{D_i}: H_{co}^{1/2}(\Sigma)\rightarrow(H_{co}^{1/2}(\Sigma))^*\subset
		%   H^{-1/2}(\partial\Omega)
		%   \end{equation*}
	%   as the restriction of $\Lambda_{D_i}$ to $H_{co}^{1/2}(\Sigma)$.
	%   Thus, replacing the assumption \eqref{eq:4.1} with
	%   \begin{equation*}
		%   \| \Lambda_{D_1}^\Sigma-\Lambda_{D_2}^\Sigma\|_{\mathcal{L}\left(H^{1/2}_{co}(\Sigma),
			%   \left(H^{1/2}_{co}(\Sigma)\right)^*\right)}\leq \frac{\epsilon}{r_0},
		%   \end{equation*}
	%   we obtain \eqref{eq:4.2}--\eqref{eq:4.3} with constants only depending on the a-priori data
	%    and on $\rho_1$. Such a result is a nearly straightforward adaptation of the theory developed in \cite{AK12}.
	%    \end{remark}

\section*{Acknowledgments}
The work of AM was supported by PRIN 2022 n. 2022JMSP2J ”Stability, multiaxial fatigue
and fatigue life prediction in statics and dynamics of innovative structural and material coupled systems”. AM is a member of the INdAM Gruppo Nazionale di Fisica Matematica.
SF research was funded by the Austrian Science Fund (FWF) SFB 10.55776/F68: "Tomography Across the Scales", project F68-01.
The work of ES  was supported by the Italian MUR through the PRIN 2022 project ``Inverse problems in PDE: theoretical and numerical analysis'', project code 2022B32J5C, under the National Recovery and Resilience Plan (PNRR), Italy, funded by the European Union  - Next Generation EU, Mission 4 Component 1 CUP F53D23002710006.
ES has also been supported by Gruppo Nazionale per l'Analisi \text{Matematica,} la Probabilità e le loro applicazioni (GNAMPA) by the grant "Equazioni alle Derivate Parziali: problemi inversi e stime quantitative".

\bibliographystyle{plain}
\bibliography{references}{}

\end{document}

%% file: packages.tex
\usepackage{geometry}
\usepackage[utf8]{inputenc}
\usepackage[T1]{fontenc}
\usepackage{lmodern}
\usepackage[english]{babel}
\usepackage{charter}
\usepackage{enumitem} 

\usepackage{amsmath,amsfonts,amssymb,amsthm}

\usepackage{graphicx, multicol,color}

\usepackage{hyperref}

\theoremstyle{plain}
\newtheorem{theorem}{Theorem}[section]
\newtheorem{lemma}[theorem]{Lemma}
\newtheorem{proposition}[theorem]{Proposition}

\theoremstyle{definition}
\newtheorem{definition}[theorem]{Definition}

\theoremstyle{remark}
\newtheorem{remark}[theorem]{Remark}

\DeclareMathOperator{\Div}{div}

\newcommand{\R}{\mathbb{R}}

\newcommand{\N}{\mathbb{N}}
\newcommand{\C}{\mathbb{C}}

\newcommand{\M}{\mathbb{M}}

\def\XXint#1#2#3{{\setbox0=\hbox{$#1{#2#3}{\int}$}
     \vcenter{\hbox{$#2#3$}}\kern-.5\wd0}}

\numberwithin{equation}{section}

\newcommand{\diff}{\hspace{0.3em}\mathrm{d}}
\newcommand {\p} {\partial}

\newcommand{\F}{\mathcal{F}}
\newcommand{\G}{\mathcal{G}}
\newcommand{\la}{\langle}
\newcommand{\ra}{\rangle}

\newcommand{\CC}{\mathcal{C}}
\newcommand{\dH}{\mathrm{d}_{\mathcal{H}}}

\newcommand{\ri}{\rho^{I}}
\newcommand{\nh}{\widehat\nabla}
\newcommand{\hash}{\textrm{\#}}
\newcommand{\oms}{\Omega^{\hash}}

\title{ {\bf Identification of an inclusion {}from local Cauchy data for time-harmonic elastic waves}}

\author{
    Sonia Foschiatti\thanks{Faculty of Mathematics, University of Vienna, Austria.}\and
    Antonino Morassi\thanks{Dipartimento Politecnico di Ingegneria e Architettura, Universit\`{a} degli Studi di Udine, Italy.}\and 
     Edi Rosset\thanks{Dipartimento di Matematica, Informatica e Geoscienze, Universit\`{a} degli Studi di Trieste, Italy.} \and
    Eva Sincich\footnotemark[3]
}

\date{}